\newtheorem{thm}{Theorem}[section]
\newtheorem{corollary}[thm]{Corollary}
\newtheorem{lemma}[thm]{Lemma}
\newtheorem{thm-dfn}[thm]{Theorem-Definition}
\theoremstyle{definition}
\newtheorem{definition}[thm]{Definition}
\numberwithin{equation}{section}
\theoremstyle{remark}
\newtheorem{remark}{Remark}[section]
\newcommand{\fg}{{\mathfrak g}}
\newcommand{\ft}{{\mathfrak t}}
\newcommand{\fl}{{\mathfrak l}}
\newcommand{\fb}{{\mathfrak b}}
\newcommand{\fp}{{\mathfrak p}}
\newcommand{\fc}{{\mathfrak c}}
\newcommand{\fk}{{\mathfrak k}}
\newcommand{\rW}{{\mathrm W}}
\newcommand{\bC}{{\mathbb C}}
\newcommand{\bG}{{\mathbb G}}
\newcommand{\bZ}{{\mathbb Z}}
\newcommand{\mF}{\mathcal{F}}
\newcommand{\mO}{\mathcal{O}}
\newcommand{\on}{\operatorname}
\newcommand{\is}{\simeq}
\newcommand{\Loc}{\on{Loc}}
\newcommand{\quash}[1]{}  %%Anything in \quash is ignored
\newcommand{\nc}{\newcommand}
\newcommand{\fraks}{{\mathfrak s}}
\newcommand{\bbH}{{\mathbb H}}
\newcommand{\bbO}{{\mathbb O}}
\newcommand{\bbR}{{\mathbb R}}
\newcommand{\calC}{{\mathcal C}}
\newcommand{\calF}{{\mathcal F}}
\nc{\so}{\mathfrak{so}}
\nc{\al}{{\alpha}} \nc{\be}{{\beta}} \nc{\ga}{{\gamma}}
\nc{\ve}{{\varepsilon}} \nc{\Ga}{{\Gamma}} %\nc{\la}{{\lambda}}
\nc{\La}{{\Lambda}}
\nc{\ho}{\text{hol}}
\def\so{{\mathfrak{so}}}
\nc{\ad }{{\on{ad }}}
\nc{\aff}{{\on{aff}}} \nc{\Aff}{{\mathbf{Aff}}}
\nc{\der}{{\on{der}}}
\nc{\diag}{{\on{diag}}}
\nc{\Fl}{{\calF\ell}}
\newcommand{\Gr}{{\on{Gr}}}
\nc{\Hg}{{\on{Higgs}}}
\newcommand{\Hom}{{\on{Hom}}}
\nc{\Id}{{\on{Id}}}
\nc{\Ind}{{\on{Ind}}}
\nc{\Op}{{\on{Op}}}
\newcommand{\pr}{{\on{pr}}}
\nc{\res}{{\on{res}}}
\nc{\tr}{{\on{tr}}}
\nc{\GSp}{{\on{GSp}}} \nc{\GU}{{\on{GU}}} \nc{\SL}{{\on{SL}}}
\nc{\SU}{{\on{SU}}} \nc{\SO}{{\on{SO}}}
\nc{\nh}{{\Loc_{J^p}(\tau')}}
\nc{\bnh}{{\Loc_{\breve J^p}(\tau')}}
\nc{\bU}{{\overline{U}}} \nc{\IC}{{\on{IC}}}
\newcommand{\beqn}{\begin{equation*}}
\newcommand{\eeqn}{\end{equation*}}
\newcommand{\beq}{\begin{equation}}
\newcommand{\eeq}{\end{equation}}
\tikzset{node distance=2em, ch/.style={circle,draw,on chain,inner sep=2pt},chj/.style={ch,join},every path/.style={shorten >=4pt,shorten <=4pt},line width=1pt,baseline=-1ex}
\begin{document}
\title{Lorentzian and Octonionic Satake equivalence}

         \author{Tsao-Hsien Chen and John O'Brien }
        \address{}
         \email{}
\thanks{}
\thanks{}

\maketitle   
\begin{abstract}
We establish a derived geometric Satake equivalence for the 
real group $G_\bbR=PSO(2n-1,1)$ (resp. $PE_6(F_4)$), to be called the Lorentzian Satake equivalence (resp.  Octonionic Satake equivalence).
By applying the real-symmetric correspondence for affine Grassmannians, we obtain a derived geometric Satake equivalence for the splitting rank
symmetric variety $X=PSO_{2n}/SO_{2n-1}$
(resp. $PE_6/F_4$).
As an application, we compute the stalks of the $\on{IC}$-complexes for spherical orbit closures in 
the real affine Grassmannian for $G_\bbR$
and the loop space of $X$. We show the stalks are given by the Kostka-Foulkes polynomials
for $GL_2$ (resp. $GL_3$) but 
with $q$ replaced by $q^{n-1}$ (resp.  $q^4$).
\end{abstract} 

\section{Introduction}
This is a companion paper of  joint work with Nadler and Macerato \cite{CMNO} on 
the derived geometric Satake equivalence for the real quaternionic linear groups.
We consider the real form 
$G_\bbR=PSO(2n-1,1)$ (resp. $PE_6(F_4)$) of
the  projective special orthogonal group 
$G=PSO_{2n}=SO_{2n}/\mu_2$ (resp. the adjoint  exceptional group $PE_6$).
Let $X=PSO_{2n}/SO_{2n-1}$ (resp.  $PE_6/F_4$) be the 
corresponding symmetric variety of splitting rank.
Let $G^\vee=Spin_{2n}$ (resp.  $E_6$) be the dual group of $G$
and let $G^\vee_X=SL_2$ (resp. $SL_3$) be the dual group of 
$G_\bbR$ in \cite{GN,N}. 
Let $L^\wedge_X=Spin_{2n-3}$ (resp. $G_2$) be the subgroup of $G^\vee$
introduced in \cite{KS}.
Denote by $\fg_X^\vee$ and  $\fl_X^\wedge$ the Lie algebra of $G^\vee_X$ and $L_X^\wedge$.
Denote by $n_X=2n-2$ (resp. $8$)

Let $\Gr_{\bbR}$  be the real affine Grassmannian for $G_\bbR$
and let $D_{L^+G_\bbR}(\Gr_{G_\bbR})$ be the 
real Satake category, that is, the dg-category of 
$L^+G_\bbR$-equivariant constructible complexes on $\Gr_{\bbR}$.
Let $D^{G_X^\vee}_{}(\mO(\fg_X^\vee[n_X]\times\fl_X^\wedge[2]//L^\wedge_X))$
be the dg-category of $G_X^\vee$-equivariant dg-modules 
over the dg-algebra $\mO(\fg_X^\vee[n_X]\times\fl_X^\wedge[2]//L^\wedge_X)$ 
of ring of functions on the graded scheme $\fg_X^\vee[n_X]\times\fl_X^\wedge[2]//L^\wedge_X$ (equipped with trivial differential).
Here $G^\vee_X$ acts on $\fg_X^\vee[n_X]$ via
the adjoint action and 
$\fl_X^\wedge[2]//L^\wedge_X$ is the adjoint quotient 
of $\fl_X^\wedge[2]$ equipped with the trivial $G_X^\vee$-action.
The main result of the paper is the following version of 
derived geometric Satake equivalence for $G_\bbR$:
\begin{thm}[Theorem \ref{main}]\label{main intro}
There is an equivalence of dg-categories
\[D_{L^+G_\bbR}(\Gr_{\bbR})\is D^{G_X^\vee}_{}(\mO(\fg_X^\vee[n_X]\times\fl_X^\wedge[2]//L^\wedge_X))\]
\end{thm}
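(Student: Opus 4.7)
The plan is to adapt the strategy of the companion paper \cite{CMNO} on real quaternionic groups to the splitting-rank orthogonal and exceptional settings, proceeding in three stages. First, I would apply the real-symmetric correspondence for affine Grassmannians (invoked in the abstract) to transfer the problem: the category $D_{L^+G_\bbR}(\Gr_{\bbR})$ becomes equivalent to the $L^+G$-equivariant Satake category on the loop space of the splitting-rank symmetric variety $X=PSO_{2n}/SO_{2n-1}$ (resp.\ $PE_6/F_4$). This turns a real-algebraic question into a complex-algebraic one and lets me use $L^+G$-equivariant machinery. Because $X$ has splitting rank, the $L^+G$-orbits on $LX$ are indexed by dominant coweights of the Nadler dual $G_X^\vee=SL_2$ (resp.\ $SL_3$), and the orbit combinatorics is genuinely controlled by the small dual group rather than by $G^\vee$ itself.

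Second, I would construct the candidate algebra acting on an IC-generator. The Bezrukavnikov--Finkelberg derived Satake equivalence identifies $D_{L^+G}(\Gr_G)$ with $D^{G^\vee}(\on{Sym}(\fg^\vee[-2]))$; its convolution action on the symmetric Satake category, together with the Knop--Schalke subgroup $L^\wedge_X\subset G^\vee$, yields a canonical graded algebra homomorphism
\[\mO\bigl(\fg_X^\vee[n_X]\times\fl_X^\wedge[2]//L_X^\wedge\bigr)\longrightarrow \on{Ext}^*_{L^+G_\bbR}(\IC_{\mathrm{gen}},\IC_{\mathrm{gen}}).\]
The degree shift $n_X=2n-2$ (resp.\ $8$) is dictated by the real dimension of the generic real orbit (equivalently, by the jump across the Cartan involution), while the factor $\fl_X^\wedge[2]//L_X^\wedge$ captures the residual Hecke action coming from the stabilizer of a generic loop in $X$, after the contribution of the dual group $G_X^\vee$ has been accounted for.

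Third, I would establish formality of the (real, hence symmetric) Satake category and upgrade the above algebra map to an isomorphism. Formality should follow from parity vanishing of IC-stalks on orbit closures in $LX$, which in this setting is precisely the Kostka--Foulkes stalk calculation advertised in the abstract, with $q$ replaced by $q^{n-1}$ (resp.\ $q^4$). The same computation produces a graded character formula that, matched against the $G_X^\vee$-character of $\mO(\fg_X^\vee[n_X]\times\fl_X^\wedge[2]//L_X^\wedge)$, forces the algebra map to be an isomorphism on every isotypic component. A standard comparison of compact generators then promotes this to the desired dg-equivalence.

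The principal obstacle is the Knop--Schalke factor $\fl_X^\wedge[2]//L_X^\wedge$: both producing the $L_X^\wedge$-invariant even-degree classes as honest elements of $\on{Ext}^*_{L^+G_\bbR}$ (from centralizer geometry of a generic real orbit) and verifying that they commute with the $\fg_X^\vee[n_X]$-action imported from the complex Satake side require a delicate case-by-case analysis, exploiting the splitting-rank hypothesis and the exceptional embeddings $Spin_{2n-3}\subset Spin_{2n}$ and $G_2\subset E_6$. This is simultaneously where the splitting-rank condition is used essentially and where the parallel between the orthogonal and octonionic cases becomes most visible, explaining why the same uniform statement covers both families.
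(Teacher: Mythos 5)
Your outline captures the right high-level skeleton — produce a generator, control its Ext-algebra, prove formality, then invoke monadicity (Barr--Beck--Lurie) — and your closing paragraph correctly flags the $\fl_X^\wedge$-factor as the dangerous part. But the middle of the argument, where the work actually happens, is substituted by an assertion rather than a construction, and the route you sketch diverges from the paper in a way that matters.

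The paper does \emph{not} transfer to the symmetric side first; it works directly on $D_{L^+G_\bbR}(\Gr_\bbR)$ and obtains the $LX$-statement only afterwards as a corollary of the real--symmetric equivalence of \cite{CN}. More importantly, the paper never constructs an abstract algebra map from $\mO(\fg_X^\vee[n_X]\times\fl_X^\wedge[2]//L_X^\wedge)$ and then checks it is an isomorphism by counting characters. Instead, it \emph{computes} $\on{Ext}^*_{D^b_{L^+G_\bbR}}(\IC_0,\IC_0\star\mO(G_X^\vee))$ outright: parity vanishing from the affine paving of convolution fibers (Lemma \ref{paving}) gives the fully faithfulness statement (Lemma \ref{faithfulness}) that reduces the Ext-computation to $W_K$-invariant relative Homs over $H^*_{T_\bbR}(\Gr_\bbR)$, and then the decisive input is O'Brien's theorem (Theorem \ref{equ}) identifying $\on{Spec}\, H_*^{T_\bbR}(\Gr_\bbR)$ with the centralizer group scheme $(G_X^\vee\times\ft)^{e^T}$. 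This lets one rewrite the invariants as functions on $(\fg_X^\vee)^{\mathrm{reg}}\times_{\ft_X^\vee//W_X}\ft$, and the splitting $\ft//W_K\cong \ft//W_M\times \fl_X^\wedge//L_X^\wedge$ from the equivariant cohomology of a point (Lemma \ref{equ of points}) is precisely what makes the $\fl_X^\wedge[2]//L_X^\wedge$ factor drop out — no case-by-case centralizer analysis, no reference to Bezrukavnikov--Finkelberg for $G^\vee$.

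Concretely, the gap is your second step. You write that Bezrukavnikov--Finkelberg together with Knop--Schalke ``yields a canonical graded algebra homomorphism'' into the Ext-algebra, but BF produces an $\mO(\fg^\vee[2])$-action for the \emph{full} dual group with the standard cohomological degree two, and it is not explained how the exotic shift $n_X$ appears, how one descends from $\fg^\vee$ to $\fg_X^\vee$, or how the $L_X^\wedge$-invariants enter as honest Ext-classes; you then defer all of this to ``a delicate case-by-case analysis.'' That deferred analysis \emph{is} the theorem. Without the explicit group-scheme description of $H_*^{T_\bbR}(\Gr_\bbR)$ from \cite{O} (or an equivalent substitute), neither the source nor the target of your proposed map is under control, and the graded-character comparison in your step four has nothing to compare. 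The formality step is also slightly off: parity vanishing is used in the paper to prove fully faithfulness, whereas formality of $\on{RHom}(\IC_0,\IC_0\star\mO(G_X^\vee))$ is imported as a separate result, \cite[Theorem 13.4]{CN}.
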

The proof relies on the techniques developed in \cite{CMNO},
the formality property of real Satake category in \cite{CN}, and the computation 
of equivariant cohomology of $\Gr_\bbR$ in \cite{O}. 
The key steps are the 
parity vanishing results in Section \ref{Parity vanishing} and the computation of the
dg-extension algebra in Theorem \ref{computation of Ext}.

Let $D_{L^+G}(LX)$ be the dg-category of 
$L^+G$-equivaraint constructible complexes on the loop space 
$LX$ of $X$. There is a real-symmetric equivalence 
$D_{L^+G}(LX)\is D_{L^+G_\bbR}(\Gr_{\bbR})$ in \cite{CN}.
As a corollary of Theorem \ref{main}, we obtain the following version 
of derived geometric Satake equivalence for $X$ establishing instances of
relative Langlands duality in \cite{BZSV}:
\begin{thm}\label{main2 intro}
There is an equivalence of dg-categories
\[D_{L^+G}(LX)\is D^{G_X^\vee}_{}(\mO(\fg_X^\vee[n_X]\times\fl_X^\wedge[2]//L^\wedge_X))\]
\end{thm}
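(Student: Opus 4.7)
The plan is to obtain Theorem \ref{main2 intro} as an immediate corollary of Theorem \ref{main intro} by pre-composing with the real-symmetric equivalence for affine Grassmannians that is recalled in the excerpt. Concretely, the main result of \cite{CN} provides a canonical equivalence of dg-categories
\[
D_{L^+G}(LX) \is D_{L^+G_\bbR}(\Gr_\bbR),
\]
which identifies the loop-space Satake category for the splitting rank symmetric variety $X = PSO_{2n}/SO_{2n-1}$ (resp.\ $PE_6/F_4$) with the real Satake category for $G_\bbR = PSO(2n-1,1)$ (resp.\ $PE_6(F_4)$).

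With this equivalence in hand, I would simply post-compose with the equivalence of Theorem \ref{main intro}, which identifies
\[
D_{L^+G_\bbR}(\Gr_\bbR) \is D^{G_X^\vee}_{}(\mO(\fg_X^\vee[n_X]\times\fl_X^\wedge[2]//L^\wedge_X)),
\]
to produce the desired equivalence
\[
D_{L^+G}(LX) \is D^{G_X^\vee}_{}(\mO(\fg_X^\vee[n_X]\times\fl_X^\wedge[2]//L^\wedge_X)).
\]

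All the genuine content sits in Theorem \ref{main intro} and in the construction of the real-symmetric equivalence of \cite{CN}; the only thing one has to verify is the compatibility of the two constructions at the level of dg-enhancements. I expect this to be formal, since the real-symmetric equivalence in \cite{CN} is built functorially via a nearby cycles / hyperbolic restriction mechanism that respects the natural dg-structures on both sides, and the target dg-category of Theorem \ref{main intro} is intrinsic to $G_\bbR$. Accordingly, the main obstacle is not in this corollary but in the proof of Theorem \ref{main intro}, where the parity vanishing results of Section \ref{Parity vanishing} and the Ext-algebra computation of Theorem \ref{computation of Ext} are doing the real work.
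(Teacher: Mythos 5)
Your proposal is correct and matches the paper's own argument exactly: the paper states Theorem \ref{main2 intro} as an immediate corollary of Theorem \ref{main intro} by composing with the real-symmetric equivalence $D_{L^+G}(LX)\is D_{L^+G_\bbR}(\Gr_\bbR)$ from \cite{CN}. There is nothing further to add.
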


\begin{remark}
In the case $X=PSO_{2n}/SO_{2n-1}$, the theorem above is proved in \cite[Theorem 4.6.1]{S} by a different method.
\end{remark}

Similar to \cite[Section 3.7]{CMNO}, we have the following 
application on computing the $\IC$-stalks for
 spherical orbits closures in $\Gr_\bbR$ and $LX$.
 Recall that  the spherical orbits in $\Gr_\bbR$ and $LX$
 are parametrized by the set 
$\Lambda_S^+$ be the real dominant weight.
For any $\lambda\in\Lambda_S^+$ let 
$\Gr_\bbR^\lambda$  and $L^\lambda X$ be the corresponding 
spherical orbits and let 
$\IC^\bbR_\lambda$ and $\IC_\lambda^X$ be the $\IC$-complex for their closure respectively.
Let $K_{\lambda,\mu}(q)$ is the Kostka-Foulkes polynomials
for $GL_2$ (resp. $GL_3$).
\begin{thm}[Corollary \ref{n_X vanishing} and Theorem \ref{Kostka-Foulkes}]\label{Kostka-Foulkes intro}
Let $\lambda,\mu\in\Lambda_S^+$. For any $z\in\Gr_\bbR^\mu$ and $y\in L^\mu X$
we have 
\begin{enumerate}
\item
$\mathscr H^{i-\frac{1}{2}\dim_\bbR\Gr_\bbR^\lambda}(\IC^\bbR_\lambda)=\mathscr H^{i-\frac{1}{2}\dim_\bbR\Gr_\bbR^\lambda}(\IC^X_\lambda)=0\ \ 
\text{for}\ \ \  n_X\nmid i$
\item
$K_{\lambda,\mu}(q)=\sum_{i}\mathscr H_z^{-n_Xi-\frac{1}{2}\dim_\bbR\Gr_\bbR^\lambda}(\IC^\bbR_\lambda)q^i=\sum_{i}\mathscr H_y^{-n_Xi-\frac{1}{2}\dim_\bbR\Gr_\bbR^\lambda}(\IC^X_\lambda)q^i$
\end{enumerate}

\end{thm}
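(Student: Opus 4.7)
My plan is to transport both IC-stalk computations to the spectral side via the derived geometric Satake equivalences of Theorems \ref{main intro} and \ref{main2 intro}, and then reduce to the classical Kostka--Foulkes formula of Lusztig for $\Gr_{SL_2}$ (resp.\ $\Gr_{SL_3}$), following the strategy of \cite[Section 3.7]{CMNO}. Because the real-symmetric equivalence of \cite{CN} identifies the two geometric Satake categories compatibly with IC-sheaves and orbit strata, the assertions for $\mathscr H^i(\IC^\bbR_\lambda)$ and for $\mathscr H^i(\IC^X_\lambda)$ reduce to a single spectral computation that can be carried out in parallel.

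The first step is to identify the spectral object corresponding to $\IC^\bbR_\lambda$. Following the standard dictionary of derived Satake, this should be, up to appropriate cohomological shifts, a free graded module $V_\lambda\otimes \mO(\fg_X^\vee[n_X]\times\fl_X^\wedge[2]//L^\wedge_X)$, where $V_\lambda$ is the irreducible $G_X^\vee$-representation of highest weight $\lambda$. The stalk functor at $z\in\Gr_\bbR^\mu$ should correspond under the equivalence to the weight-$\mu$ functor on the spectral side. The crucial structural point is that the extra factor $\fl_X^\wedge[2]//L_X^\wedge$, which has no counterpart in the usual complex Satake picture, enters only as a polynomial tensor factor of coefficients invisible to the $T_X^\vee$-weight decomposition, so the normalized stalk depends only on the coordinate ring of $\fg_X^\vee[n_X]$.

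Once this identification is in place, part (1) follows quickly: the only nontrivial graded contribution comes from $\mO(\fg_X^\vee[n_X])\otimes V_\lambda$, which is concentrated in degrees divisible by $n_X$, consistent with the independent geometric parity vanishing established in Section \ref{Parity vanishing}. For part (2), the stalk is now identified with the graded $\mu$-weight piece of $V_\lambda$ tensored with the coordinate ring of $\fg_X^\vee$, and the Brylinski--Kostant theorem for $G_X^\vee = SL_2$ (resp.\ $SL_3$) computes this graded multiplicity as the Kostka--Foulkes polynomial $K_{\lambda,\mu}(q)$ of $GL_2$ (resp.\ $GL_3$), with the substitution $q\mapsto q^{n_X}$ reflecting the degree-$n_X$ placement of $\fg_X^\vee$.

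The main obstacle I anticipate lies in the first step: making the spectral counterpart of $\IC^\bbR_\lambda$ explicit, and proving that the geometric stalk functor intertwines with the weight functor on the spectral side. This requires tracing carefully through the construction of the equivalence of Theorem \ref{main intro}, and in particular controlling the role of the factor $\fl_X^\wedge[2]//L_X^\wedge$. Once this translation step is pinned down, both the parity vanishing and the Kostka--Foulkes identification follow from standard representation-theoretic facts about $SL_2$ and $SL_3$.
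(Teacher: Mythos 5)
Your proposal takes a genuinely different route from the paper, and the route has gaps that are acknowledged but not filled.

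The paper does not deduce the statement from the derived Satake equivalence. Part (1) is proved directly from geometry: Lemma \ref{paving} shows the fibers of convolution morphisms are paved by real affine cells of dimension divisible by $n_X$, and since the shifted $\IC_\lambda$ is a summand of $(m^\lambda)_!\bC$, the stalk vanishing in Corollary \ref{n_X vanishing} is immediate. Part (2) is proved via equivariant cohomology: one specializes $H^*_{T_\bbR}(\Gr_\bbR,\mF)$ at a generic $t\in\ft$, uses the canonical splitting from the real MV filtration (from \cite{O}), identifies the cohomological grading with the $n_X\check\rho_X$-eigenvalue filtration, and invokes the surjectivity from Lemma \ref{faithfulness}(2), then runs the argument of \cite[Theorem 4.22]{CMNO}. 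No appeal to the categorical equivalence of Theorems \ref{main intro}/\ref{main2 intro} is needed.

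Your plan to pass through the equivalence and then cite Brylinski--Kostant has three unresolved steps. First, identifying the spectral image of $\IC^\bbR_\lambda$ as the free module $V_\lambda\otimes\mO(\fg_X^\vee[n_X]\times\fl_X^\wedge[2]//L_X^\wedge)$: this is what one expects by analogy with Bezrukavnikov--Finkelberg, but Theorem \ref{main} is established abstractly via Barr--Beck and formality, and it matches generators; it does not come with an explicit dictionary for all $\IC_\lambda$'s. Second, and more importantly, the claim that the $\ast$-stalk at $t^\mu$ intertwines with a graded $\mu$-weight functor on the spectral side is not a formal consequence of the equivalence; it is precisely the content that in the classical setting requires the MV/hyperbolic localization input, which is exactly what the paper feeds in directly through the filtration from \cite{O}. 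Third, the dismissal of the $\fl_X^\wedge[2]//L_X^\wedge$ factor as "invisible" needs care: while $G_X^\vee$ acts trivially on it, it carries a nontrivial degree-$2$ grading, and a priori it could pollute the stalk; the fact that it does not is equivalent to the $n_X$-divisibility statement in part (1), which the paper secures geometrically. Finally, note a structural inversion: the paving lemma (and hence the parity vanishing) is used, through Lemma \ref{faithfulness} and Theorem \ref{computation of Ext}, to prove Theorem \ref{main}; deducing part (1) back out of Theorem \ref{main} is not circular but it reverses the logical flow and does not save you from having to establish the paving lemma anyway.
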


\subsection{Acknowledgements}

We would like to thank David Nadler and Mark Macerato
 for useful discussions.
 T.-H. Chen also thanks the NCTS-National Center for Theoretical Sciences at Taipei where parts of this work were done.
 The research of T.-H. Chen is
supported by NSF grant DMS-2143722.

\section{Notation related to real groups}

\subsection{Real groups}
Let $G_\bbR$ be a real form of a complex connected reductive group $G$
and let $K_\bbR\subset G_\bbR$ be a  maximal compact subgroup.
Denote by $\fg_\bbR$ and $\fk_\bbR$ their Lie algebras.
 We have the Cartan decomposition $\fg_\bbR=\fk_\bbR\oplus\fp_\bbR$.
Choose a maxmial abelian subspace $\fraks_\bbR\subset\fp_\bbR$
and let  $S_\bbR=\on{exp}(\fraks_\bbR)\subset G_\bbR$ 
be the corresponding maximal (connected) split torus.
We choose a minimal parabolic subgroup $P_\bbR$ with the Levi subgroup 
$L_\bbR=Z_{G_\bbR}(S_\bbR)$. 
We have $L_\bbR\is M_\bbR\times S_\bbR$ where $M_\bbR=Z_{K_\bbR}(S_\bbR)$. 
We choose a maximal torus $H_\bbR\subset L_\bbR$ 
and let $T_\bbR=K_\bbR\cap H_\bbR$ which is a maximal torus for both $K_\bbR$ and $M_\bbR$. 
We have $H_\bbR\is T_\bbR\times S_\bbR$.

We write $\frak m_\bbR,\ft_\bbR, \frak h_\bbR,$ etc, for the Lie algebras of 
$M_\bbR, T_\bbR, H_\bbR,$ etc, and $\frak m,\ft,\frak h,$ etc, for their complexification.
We write $K,S, P, M,$ etc, for the complexification of $K_\bbR, S_\bbR, P_\bbR, M_\bbR,$ etc, We fix a Borel subgroup $B\subset G$ such that $H\subset B\subset P$.

Let $W_X=N_K(\frak s)/Z_K(\frak s)$ be the little Weyl group of $G_\bbR$,
$W_K=N_{K^0}(T)/T$ be the Weyl group of $K$,
and $W_M=N_{M^0}(T)/T$ the Weyl group of $M$, where 
$K^0$ and $M^0$ are the neutral of $K$ and $M$.

Let $\Lambda_H=\Hom(\bC^\times,H)$ be the set of coweights of $H$ and 
$\Lambda_H^+$ be the set of dominant coweights with respect to $B$.
Let $2\rho_G$ be the sum of positive roots of $G$.
Let $\Lambda_S=\Hom(\bG_m,S)$ be the set of real coweights
and $\Lambda_S^+=\Lambda_S\cap\Lambda_H^+$ be the set of dominant real weights.

\subsection{Real affine Grassmannian}
Consider the 
real affine grassmannian $\Gr_\bbR=LG_\bbR/L^+G_\bbR$,
where $LG_\bbR=
G_\bbR(\bbR((t)))$ and $L^+G_\bbR=G_\bbR(\bbR[[t]])$
are the real loop group and arc groups.
For any $\lambda\in\Lambda_S^+$ let 
$\Gr_\bbR^\lambda=L^+G_\bbR\cdot t^\lambda$ be the corresponding real spherical orbit 
in $\Gr_\bbR$, see \cite[Proposition 3.6.1]{N}.

\subsection{Dual groups}
Let $G^\vee$ be the complex dual group of $G$.
In \cite{GN,N}, the authors associated to each real form $G_\bbR$
a canonical connected reductive subgroup $G_X^\vee\subset G^\vee$
called the dual group of $G_\bbR$.
We denote by $2\check\rho_{X}$  the sum of positive coroots of $G^\vee_X$.

Let $L^\vee\subset G^\vee$ be the dual Levi subgroup.
According to  \cite[Section 9]{KS}, the little Weyl group $W_X$
acts on $L^\vee$ and  $G^\vee_X$ is centralized by 
a finie index subgroup $L^\wedge_X$ of the fixed point subgroup $(L^\vee)^{W_X}$.
We denote by $\fg^\vee, \fg_X^\vee, \fl_X^\wedge$ the Lie algebra of $G^\vee, G^\vee_X, L_X^\wedge$.

\subsection{The real groups $PSO(2n-1,1)$ and $PE_6(F_4)$}\label{E6}
For the rest of the paper we focus on the case 
$G=PSO_{2n}$ or $PE_6$ (the adjoin from of $E_6$) and $G_\bbR$ is the 
Lorentz group 
$PSO(2n-1,1)$ or $PE_6(F_4)$.

In the case $G_\bbR=PSO(2n-1,1)$, we have 
$K=SO_{2n-1}$,
$G^\vee=Spin_{2n}$, $G^\vee_X=SL_2$,  $M=SO_{2n-2}$, $\Lambda_S\is\mathbb Z$,
$\Lambda^+_S\is\mathbb Z_{\geq0}$,
and $L_X^\vee=Spin_{2n-3}$

In the case $G_\bbR=PE_6(F_4)$, we have $K=F_4$, 
$G^\vee=E_6$ (the simply connected form of $E_6$), $G^\vee_X=SL_3$,  $M=Spin_8$,
$\Lambda_S\is\bZ\times\bZ$, $\Lambda^+_S\is\{(a,b)\in\bZ\times\bZ|a\geq b\}$,
and
$L_X^\vee=G_2$.

We denote by $\ft_X^\vee\subset\fg_X^\vee$ the Cartan subalgebra of 
diagonal matrices and  
$\fb_X^\vee\subset\fg_X^\vee$  the standard Borel subalgebra of 
upper triangular matrices. 
The Weyl group  of $G_X^\vee$ is isomorphic to the little Weyl group $W_X$
and we have $W_K\is W_M\rtimes W_X$.

We have a natural identification of $\Lambda_S$ with the weight lattice of $G^\vee_X$
and for any $\lambda\in\Lambda_S$ we have 
\beq\label{equ of paring}
\langle\lambda,\rho_G\rangle=n_X\langle\lambda,\check\rho_{X}\rangle
\eeq
where 
$n_X=2n-2$ or $8$
in the case $G_\bbR=PSO(2n-1,1)$ or $PE_6(F_4)$.

%%%%%%%%%
\quash{

\subsection{Special orthogonal groups}
For any positive integer $\ell$ we denote by $SO_\ell$
be the complex special orthogonal group associated to the 
qudratic form 
$q_\ell(v_1,...,v_\ell)=v_1v_\ell+v_2v_{\ell-1}+\cdot\cdot\cdot$ on $\bC^\ell$.
We denote by $T_\ell\subset SO_\ell$ the diagonal maximal torus
and $B_\ell\subset SO_\ell $
the upper triangular Borel subgroup.
We have 
$T_{2n}=\on{diag}(v_1,...,v_n,v_n^{-1},....,v_1^{-1})$
and $T_{2n+1}=\on{diag}(v_1,...,v_n,1,v_n^{-1},....,v_1^{-1})$.
We denote by $\rW_\ell=N(T_\ell)/T_\ell$ the Weyl group of $G_\ell$.
We have $\rW_{2n}\is\on{S}_n\ltimes\{\pm1\}^{n}_{\on{even}}$,
where $\{\pm1\}^{n}_{\on{even}}\subset \{\pm1\}^{n}$ is the subgroup consisting of 
even number of $-1$, and $\rW_{2n+1}\is\on{S}_n\ltimes\{\pm1\}^n$

Denote by $\mathfrak{so}_\ell$ and $\ft_\ell$ the Lie algebra of $SO_\ell$ and $T_\ell$
and $\fc_\ell=\ft_\ell//\rW_\ell$ the invariant quotient.
We have the following matrix presentations of $\so_\ell$. Let $J_\ell$ be the 
$\ell\times\ell$
permutation matrix sending $i\to \ell+1-i$ for $1\leq i\leq\ell$. Note that,
for any $\ell\times\ell$-matrix $M$, $J_\ell M^t J_\ell$ is equal to the 
transpose of $M$ with respect to the anti-diagonal.
We have 
\[\so_{2n}=\begin{pmatrix} A & B  \\ 
C &D  \end{pmatrix}\]
where $A,B,C,D$ are $n\times n$-matrices satisfying 
$D=-J_n A^t J_n$, $B=-J_n B^tJ_n$, $D=-J_nD^tJ_n$.
and 
\[\so_{2n+1}=\begin{pmatrix} A &x& B  \\ 
y&0 &-J_nx^t\\
C&-J_ny^t&D  \end{pmatrix}\]
where $A,B,C,D$ are $n\times n$-matrices satisfying 
$D=-J_n A^t J_n$, $B=-J_n B^tJ_n$, $D=-J_nD^tJ_n$
and $x$ and $y$ are $n\times 1$-matrix and $1\times n$-matrix respectively.

We have 
$\ft_{2n}=\diag(t_1,...t_n,-t_n,...,-t_1)$ and $\ft_{2n+1}=\diag(t_1,...t_n,0,-t_n,...,-t_1)$.
Let $p_i$ be the $i$-th elementary symmetric polynomials with variable 
$t^2_1,...,t^2_n$. We have $p_1=\sum_{i=1,...,n} t_i^2$
and $p_n=t_1^2\cdot\cdot\cdot t_n^2$. We set $c_n=t_1\cdot\cdot\cdot t_n$
such that $p_n=c_n^2$.
Then we have 
$\mO(\fc_{2n})\is\bC[p_1,...,p_{n-1},c_n]$
and $\mO(\fc_{2n+1})\is\bC[p_1,...,p_n]$.

\subsection{The symmetric variety $PSO_{2n}/SO_{2n-1}$}
The center of $SO_{2n}$ is isomorphic to 
$\mu_2=\{\pm1\}$ and we denote by $G=PSO_{2n}=SO_{2n}/\mu_2$.
Let $H=T_{2n}/\mu_2$ and $B=B_{2n}/\mu_2$
be the maximal torus and the standard Borel subgroup of $G$.
Consider the $2n\times 2n$
permutation matrix 
\[P_{2n}=\begin{pmatrix}  0&0& 1  \\ 
0&I_{2n-2} &0\\
1&0&0  \end{pmatrix}\]
 sending 
$1$ to $2n$ and fixing $2\leq i\leq 2n-1$.
Consider the involution $\theta$ of $G$ given by 
$\theta(g)=P_{2n}g P_{2n}^{-1}$.
Let $K=G^\theta$ be the corresponding symmetric subgroup
and $X=G/K$ be the corresponding symmetric variety.
Let $T=H^{\theta,\circ}$ be the maximal torus of $K$
and let $S=H^{\on{inv}\circ\theta,\circ}$ be the maximal split sub-torus.
We have $K\is SO_{2n-1}$ and $X\is PSO_{2n}/SO_{2n-1}$
and $S\is\bG_m$.

Let $\fg$ and 
$\fk$ be the Lie algebras of $G$ and $K$.
We have 
$\fg=\fk\oplus\fp$ the Cartan decomposition where 
$\fp$ is the $-1$-eigenspace of $\theta$ on $\so_{2n}$.
We have $\fg=\so_{2n}$, $\frak k\is\so_{2n-1}$, and $\on{dim}\fp=2n-1$.
We have the following matrix presentation of $\fk$ and $\fp$
as subspace of $\fg$:
\[\fk=\begin{pmatrix}  0&x& 0  \\ 
-J_{2n-2}x^t&\so_{2n-2} &-J_{2n-2}x^t\\
0&x&0  \end{pmatrix}\ \ \ \ \ \fp=\begin{pmatrix}  a&y& 0  \\ 
J_{2n-2}y^t&0 &-J_{2n-2}y^t\\
0&-y&-a  \end{pmatrix}\]

The Cartan decomposition induces a decomposition
$\ft_{2n}=\ft\oplus\frak s$ where $\ft=\ft_{2n}\cap\fk$
and $\frak s=\ft_{2n}\cap\fp$. 
Let $M=Z_K(\frak s)$ be the centralizer of $\frak s$ in $K$.
We have $M\is SO_{2n-2}$.
Let $\mathfrak m$ be the Lie algebra of $M$
and let $\so_{2n}=\frak m\oplus\frak s\oplus\frak n\oplus\frak n^-$
be the root space decomposition. 
We have $\frak m\is\so_{2n-2}$ and $\dim\frak s=1$, $\dim\frak n=2n-2$.

We have the following matrix presentation of 
$\mathfrak m$, $\mathfrak s$, $\ft$, and $\frak n$
as subalgebras of $\so_{2n}$:
\[\mathfrak m=\begin{pmatrix}  0&0& 0  \\ 
0&\so_{2n-2} &0\\
0&0&0  \end{pmatrix}
\ \ \ \ \ \ 
\mathfrak s=\begin{pmatrix}  s&0& 0  \\ 
0&0 &0\\
0&0&-s  \end{pmatrix}\ \ \ 
\mathfrak n=\begin{pmatrix}  0&x& 0  \\ 
0&0&-J_{2n-2}x^t\\
0&0&0  \end{pmatrix}\]
\[\ft=\diag(0,t_{1},...,t_{n-1},-t_{n-1},...,-t_1,0)\]

\subsection{The Lorentz group $PSO(2n-1,1)$}
Consider the 
conjugation $\eta_c(g)=\bar g$ on 
$PSO_{2n}$ with fixed points subgroup $PSO(2n)=(PSO_{2n})^{\eta_c}$
the compact real from of $PSO_{2n}$.
The involution $\theta$ commutes with $\eta_c$ 
and the composition $\eta:=\theta\circ\eta_c=\eta_c\circ\theta$
defines a real form of $PSO_{2n}$ with fixed points subgroup
$G_\bbR=PSO(2n-1,1)$ known as the \emph{Lorentz group}.
We denote by $K_\bbR=G_\bbR\cap K\is SO(2n-1)$ the compact real form of 
$G_\bbR$ and $T_\bbR=T\cap G_\bbR$.
We write $\rW_\bbR\is\rW_{2n-1}$ for the Weyl group of $K_\bbR$.

Denote by $\fg_\bbR=\so_{2n-1,1}$ be the Lie algebra of $PSO(2n-1,1)$.
We have the following matrix presentation
\[\so_{2n-1,1}=\begin{pmatrix}  ir&x& 0  \\ 
-J_{2n-2}\bar x^t&\so_{2n-2}(\bbR) &-J_{2n-2}x^t\\
0&\bar x&-ir  \end{pmatrix}\]
where $r\in\bbR$ are $x\in\bC^{2n-2}$. 
\subsection{Dual groups}
Let $G^\vee=Spin_{2n}$ be the dual group of $PSO_{2n}$.
Let $2\check\rho_M$ be the sum of positive roots of $M\is SO_{2n-2}$.
We can identify $2\check\rho_M$ as a co-character 
$2\check\rho_M:\bG_m\to M^\vee\is Spin_{2n-2}$
and we denote by 
$L^\vee=Z_{G^\vee}(2\check\rho_M)$ be the centralizer of $2\check\rho_M$
in $G^\vee$. 
We have $L^\vee\is Spin_4\times\bG_m^{n-2}$
and $L^{\vee,der}\is Spin_4\is SL_2\times SL_2$.

In \cite{GN,N}, the authors associated to each spherical variety 
$X$ a canonical connected reductive subgroup $G_X^\vee\subset G^\vee$
called the dual group of $X$.
In the case $X=PSO_{2n}/SO_{2n-1}$,
we have $G_X^\vee\is SL_2$ viewed as a subgroup of $G^\vee=Spin_{2n}$
via the embedding
\[SL_2\to SL_2\times SL_2\is Spin_4\to L^\vee\to  G^\vee.\]
where the first map is the diagonal embedding.

Let $\frak l^\vee\is\so_4\times\bC^{n-2}$, $\frak l^{\vee,der}\is\so_4$, and 
$\fg_X^\vee\is\frak{sl}_2$ be the Lie algebras of $L^\vee$
$L^{\vee,der}$ and $G_X^\vee$.
We have the following matrix presentations
\[\frak l^\vee=\begin{pmatrix}  t_1&0&b&x&0&0  \\ 
0&\ddots&0&0&0&0\\
c&0&t_n&0&0&-x\\
y&0&0&-t_n&0&-b\\
0&0&0&0&\ddots&0\\
0&0&-y&-c&0&-t_1  \end{pmatrix}\supset
\frak l^{\vee,der}=\begin{pmatrix}  t_1&0&b&x&0&0  \\ 
0&0&0&0&0&0\\
c&0&t_n&0&0&-x\\
y&0&0&-t_n&0&-b\\
0&0&0&0&0&0\\
0&0&-y&-c&0&-t_1 
\end{pmatrix}\supset\]
\[
\fg_X^\vee=\begin{pmatrix}  t_1&0&b&b&0&0  \\ 
0&0&0&0&0&0\\
c&0&0&0&0&-b\\
c&0&0&0&0&-b\\
0&0&0&0&0&0\\
0&0&-c&-c&0&-t_1 \end{pmatrix}
=\begin{pmatrix}  
s&0&b&0&0&0  \\ 
0&0&0&0&0&0\\
c&0&-s&0&0&0\\
0&0&0&s&0&-b\\
0&0&0&0&0&0\\
0&0&0&-c&0&-s\end{pmatrix}\oplus\begin{pmatrix}  
s&0&0&b&0&0 \\ 
0&0&0&0&0&0\\
0&0&s&0&0&-b\\
c&0&0&-s&0&0\\
0&0&0&0&0&0\\
0&0&-c&0&0&-s\end{pmatrix}\]
where $2s= t_1$.
}
%%%%%%%%%

\section{Equivariant cohomology}
We first recall 
the following description of 
$M_\bbR$ and $K_\bbR$-equivariant cohomology of a point.

\begin{lemma}\label{equ of points}
(1) There are isomorphisms of graded scheme
\[\on{Spec}(H^*_{T_\bbR}(\on{pt}))\is\ft[2]\ \ \ \ \on{Spec}(H^*_{M_\bbR}(\on{pt}))\is\ft[2]//W_M\]
\[\on{Spec}(H^*_{K_\bbR}(\on{pt}))\is\on{Spec}(H^*_{M_\bbR}(\on{pt}))^{W_X}
\is(\ft[2]//W_M)^{W_X}\is
\ft[2]//W_K\]
(2) There are isomorphism of graded schemes
\[
\on{Spec}(H^*_{M_\bbR}(\on{pt}))\is\ft_X^\vee[n_X]\times\fl_X^\wedge[2]//L_X^\wedge
\]
\[\on{Spec}(H^*_{K_\bbR}(\on{pt}))\is\ft_X^\vee[n_X]//W_X\times\fl_X^\wedge[2]//L_X^\wedge.\]
\end{lemma}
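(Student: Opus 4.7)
For part (1), the plan is to invoke Borel's theorem on the equivariant cohomology of a compact connected Lie group. Since $T_\bbR$, $M_\bbR$ and $K_\bbR$ are all compact and $T_\bbR$ is simultaneously a maximal torus of $M_\bbR$ and of $K_\bbR$ (see Section 2.1), Borel's theorem yields
\[
H^*_{T_\bbR}(\pt)\cong \Sym(\ft_\bbR^*),\qquad H^*_{H_\bbR}(\pt)\cong H^*_{T_\bbR}(\pt)^{W_H}\quad (H=M,K),
\]
with generators placed in cohomological degree $2$. Complexifying and taking $\Spec$ turns these into $\ft[2]$, $\ft[2]//W_M$ and $\ft[2]//W_K$ respectively. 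The semidirect product decomposition $W_K\cong W_M\rtimes W_X$ recorded in Section \ref{E6} then immediately gives $(\ft[2]//W_M)^{W_X}\cong \ft[2]//W_K$, completing part (1).

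For part (2), the strategy is to identify the graded algebras $\mO(\ft)^{W_M}$ and $\mO(\ft_X^\vee)\otimes \mO(\fl_X^\wedge)^{L_X^\wedge}$ by matching Chevalley generators after the prescribed shifts. Both sides are polynomial rings, the left by Chevalley--Shephard--Todd applied to the real reflection group $W_M$, the right because $L_X^\wedge$ is reductive and $\ft_X^\vee$ is a vector space. It therefore suffices to exhibit a bijection between fundamental invariants with matching degrees, which I would carry out case by case using the data from Section \ref{E6}.

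In the case $G_\bbR=PSO(2n-1,1)$, $W_M$ is the Weyl group of $SO_{2n-2}$, whose fundamental invariants on $\ft$ are the elementary symmetric polynomials $p_1,\dots,p_{n-2}$ in $t_1^2,\dots,t_{n-1}^2$ (degrees $2,4,\dots,2(n-2)$) together with the Pfaffian-like generator $c=t_1\cdots t_{n-1}$ of degree $n-1$; after the $[2]$-shift these sit in degrees $4,8,\dots,4(n-2),\,2(n-1)=n_X$. I would pair $c$ with the coordinate on $\ft_X^\vee=\Lie(SL_2)^{\rm Cart}$ shifted to degree $n_X$, and pair the $p_i$ with the fundamental $Spin_{2n-3}$-invariants on $\fl_X^\wedge$ (of type $B_{n-2}$, degrees $2,4,\dots,2(n-2)$) after the $[2]$-shift.
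In the case $G_\bbR=PE_6(F_4)$, $W_M=W(D_4)$ has fundamental invariants in degrees $2,4,6,4$ (two degree-$4$ generators reflecting triality); after the $[2]$-shift the degrees are $4,8,12,8$. I would pair the two degree-$8$ generators with a basis of the $2$-dimensional Cartan $\ft_X^\vee$ of $SL_3$ placed in degree $n_X=8$, and pair the degree-$4$ and degree-$12$ generators with the fundamental $G_2$-invariants on $\fl_X^\wedge=\fg_2$ (degrees $2,6$) after the $[2]$-shift.

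The second formula in part (2) then follows by taking $W_X$-invariants in the $\ft_X^\vee$-factor, using the first formula of part (1) together with the fact recalled in Section 2.3 that $W_X$ centralizes $L_X^\wedge$ (so acts trivially on $\fl_X^\wedge$ and its adjoint quotient). The main obstacle is the bookkeeping for the Pfaffian-type generator in the $D_{n-1}$ and $D_4$ settings and, in the $F_4$ case, identifying which pair of $D_4$ generators corresponds to $\ft_X^\vee$ versus $\fl_X^\wedge$; once the Chevalley degrees are tabulated as above, the isomorphism is forced.
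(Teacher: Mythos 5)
Your proof is correct and takes essentially the same approach as the paper: part (1) is the standard Borel picture for equivariant cohomology of a point for compact connected groups, and part (2) is the grading comparison the paper alludes to, which you carry out explicitly by tabulating the Chevalley degrees of $W(D_{n-1})$, $W(B_{n-2})$, $W(D_4)$, $W(G_2)$ and matching them against the shifts $[n_X]$ and $[2]$. The only point you leave slightly informal is the $W_X$-equivariance of the isomorphism in the last step; this can be bypassed by comparing generator degrees directly for $\mO(\ft)^{W_K}$ against $\mO(\ft_X^\vee)^{W_X}\otimes\mO(\fl_X^\wedge)^{L_X^\wedge}$ (degrees $4,8,\dots,4(n-1)$ versus $4(n-1)$ and $4,8,\dots,4(n-2)$ in the $D$-case; $4,12,16,24$ versus $16,24$ and $4,12$ in the $E_6$-case), which closes the gap without invoking equivariance.
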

\begin{proof}
Part (1) is standard and part (2) 
follows from comparing gradings on both sides
(in view of the description of $G_X^\vee$ and  $L_X^\wedge$ in Section \ref{E6}).

\end{proof}

We recall the following description of 
$T_\bbR$-equivariant homology of $\Gr_\bbR$
the  in \cite{O}.
Let $\mathbf e\in\fb_X^\vee$ the nilpotent Jordan matrix of maximal size (i.e., the 
standard regular nilpotent element in $\fg_X^\vee$).
Lemma \ref{equ of points} (2) induces 
a  map 
\beq\label{projection}
p^T:\ft[2]\to\ft[2]//\rW_M\is\ft_X^\vee[n_X]\times\fl_X^\vee[n_X]//L_X^\vee\stackrel{\pr}\to
\ft_X^\vee[n_X]
\eeq
Regard $\mathbf e$ as a constant map
$\mathbf e:\ft\to \mathbf e\subset\fb_X^\vee$
and consider the the map 
\beq\label{e^T}
e^T=p^T+\mathbf e:\ft\to \fb_X^\vee.
\eeq
Consider the constant group scheme $G_X^\vee\times\ft\to\ft$ over $\ft$
and let 
$(G_X^\vee\times\ft)^{e^T}\subset G_X^\vee\times\ft$ be the sub group scheme of stabilizers of the section $e^T$:
\[\xymatrix{(G_X^\vee\times\ft)^{e^T}\ar[r]\ar[d]&G_X^\vee\times\ft\ar[d]^{act}\\
\ft\ar[r]^{e^T\times\on{id}}&\fg_X^\vee\times\ft}\] 
where $act(g,t)=(ge^T(t)g^{-1},t)$.

\begin{thm}\cite[Theorem 8]{O}\label{equ}
There is an isomorphism of group schemes
\[
\on{Spec}(H_*^{T_\bbR}(\Gr_\bbR))\is(G_X^\vee\times\ft)^{e^T}
\]
\end{thm}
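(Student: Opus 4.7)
The plan is to adapt the Bezrukavnikov--Finkelberg--Mirkovi\'c description of the equivariant homology of an affine Grassmannian as a universal centralizer to the real setting. The overall strategy is to combine equivariant localization at the $T_\bbR$-fixed points with a Chang--Skjelbred / GKM analysis, then match the resulting system of congruences with the defining equations of the stabilizer group scheme $(G_X^\vee\times\ft)^{e^T}$.

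First, I would identify the fixed locus $\Gr_\bbR^{T_\bbR}$ with the real coweight lattice $\Lambda_S$ (the points $t^\lambda$ for $\lambda\in\Lambda_S$). Under the identification in Section \ref{E6}, $\Lambda_S$ is the weight lattice of $G_X^\vee$, so localization realizes $\on{Spec}(H_*^{T_\bbR}(\Gr_\bbR))$ as a closed subscheme of a torus bundle over $\ft$ whose fiber over a generic point is $T_X^\vee\times\{t\}$. Second, I would use Chang--Skjelbred to characterize the image by examining the $1$-dimensional $T_\bbR$-orbits. These orbits, following \cite{N}, are organized by the real (restricted) roots of $G_\bbR$ and give rise, via real $SL_2$-subgroups, to relations between the values of a class at adjacent fixed points $t^\lambda$ and $t^{s_\alpha\lambda}$. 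The localization image thus consists of families of sections $\Lambda_S\to\mathcal{O}(\ft)$ satisfying a system of divisibility congruences indexed by the real coroots.

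Third, I would match these GKM equations with the equations cutting out $(G_X^\vee\times\ft)^{e^T}$. The semisimple part $p^T:\ft\to\ft_X^\vee$ records the image of $\ft$ in the Cartan of $\fg_X^\vee$ (via Lemma \ref{equ of points}(2)), and the shift by the principal nilpotent $\mathbf{e}$ appears because each real $SL_2$ contributes, after pushing through the Satake quotient by $W_M$, a relation of the form ``conjugation by a unipotent element sends $p^T+\mathbf{e}$ to itself,'' rather than merely fixing $p^T$. Under the duality $\Lambda_S\leftrightarrow\xch(T_X^\vee)$, this packages into the condition that an element $(g,t)\in G_X^\vee\times\ft$ stabilizes the whole section $e^T(t)=p^T(t)+\mathbf{e}$; the $n_X$-grading on the scheme side is automatic from the comparison of gradings in Lemma \ref{equ of points}(2) and the equality \eqref{equ of paring}.

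The main obstacle is the translation in Step 3: showing that the GKM divisibility relations on $\Lambda_S$ are equivalent to the stabilizer equations for $e^T$, rather than merely for the semisimple part $p^T$. Concretely this requires identifying how the $1$-parameter $T_\bbR$-orbits in $\Gr_\bbR$, attached to the restricted roots of $\fg_\bbR$, match the simple root $SL_2$-triples inside $\fg_X^\vee$, and verifying that the image of a regular nilpotent under this identification is precisely $\mathbf{e}$. This matching is the content of the restricted-root combinatorics computed in \cite{O} for the two cases $PSO(2n-1,1)$ and $PE_6(F_4)$, using the explicit descriptions of $W_X$, $W_M$, and $L_X^\wedge$ recorded in Section \ref{E6}.
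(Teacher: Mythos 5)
The paper does not prove this theorem; it is imported wholesale from \cite[Theorem 8]{O}. There is therefore no in-paper argument to compare your proposal against, and the ``proof'' you should be matching is O'Brien's, which this paper treats as a black box input (together with the canonical splitting of \cite[Theorem 3]{O} used later in the proof of Theorem \ref{Kostka-Foulkes}).

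That said, your sketch is a reasonable outline of a Bezrukavnikov--Finkelberg--Mirkovi\'c--style argument adapted to the real setting, and the ingredients you list (fixed locus $\Gr_\bbR^{T_\bbR}\cong\Lambda_S$, restricted-root GKM relations, the comparison of gradings via Lemma \ref{equ of points}(2) and \eqref{equ of paring}) are the correct raw material. Two gaps remain even at the level of a plan. First, you identify the heart of the matter --- converting the GKM divisibility congruences into the stabilizer equations for the full section $e^T=p^T+\mathbf e$ rather than just for $p^T$ --- but defer it, so the proposal does not actually close the argument; this is exactly where the regular-nilpotent shift, and hence the interesting geometry, lives. Second, the statement is an isomorphism of \emph{group schemes} over $\ft$, and your outline produces at best an isomorphism of schemes: nothing in the sketch addresses how the Pontryagin/convolution product on $H_*^{T_\bbR}(\Gr_\bbR)$ corresponds under $\on{Spec}$ to the fiberwise group structure on $(G_X^\vee\times\ft)^{e^T}$. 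That compatibility is not automatic from the algebra identification and would need a separate argument (e.g.\ tracking the coproduct through localization, or using monoidality of a fiber functor), so the proposal as written falls short of the full assertion.
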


\section{Parity vanishing and fully faithfulness}\label{Parity vanishing}
Let $\omega\in\Lambda_S^+$ be the minuscule real coweight
and $\Gr_\bbR^\omega$ be the minuscule real spherical orbit.
We have 
here $\Gr_\bbR^\omega\is\mathrm S^{2n-2}$ is the $(2n-2)$-sphere if $G_\bbR=PSO(2n-1,1)$
and $\Gr_\bbR^\omega\is\bbO P_2$ is the octonionic projective plane if $G_\bbR=PE_6(F_4)$.
Note that $\dim_\bbR\Gr_\bbR^{\omega}=n_X$.

\begin{lemma}\label{paving}
Consider the $k$-fold convolution morphism
\[m_k:\Gr_\bbR^\omega\tilde\times\cdot\cdot\cdot\tilde\times\Gr_\bbR^\omega\to\Gr_\bbR.\]
The non-empty fibers of $m^k$ are paved by real affine space of real dimension divisible by
$n_X$.
\end{lemma}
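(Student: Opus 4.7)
The plan is to stratify each factor of the twisted product by the attracting cells of a generic cocharacter $\bbG_m \hookrightarrow S_\bbR$, transport the stratification to the convolution space, and analyze the restriction of $m_k$ to each stratum.

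The first step is to record a cell decomposition
\[
\Gr_\bbR^\omega = \bigsqcup_{\nu\in W_X\cdot\omega} C_\nu,
\]
where each $C_\nu$ is a real affine cell given as the intersection of $\Gr_\bbR^\omega$ with the real semi-infinite orbit labelled by $\nu$. In both cases at hand this decomposition is classical: for $G_\bbR = PSO(2n-1,1)$ one gets two cells of real dimensions $0$ and $n_X = 2n-2$, and for $G_\bbR = PE_6(F_4)$ one gets cells whose real dimensions are again multiples of $n_X=8$. Indeed, the real dimension of each $C_\nu$ is of the form $\langle \omega+\nu, 2\rho_G\rangle/2$ and hence divisible by $n_X$ by \eqref{equ of paring}.

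Because the twisted product is locally trivial in each factor, the product stratification indexed by $k$-tuples $(\nu_1,\ldots,\nu_k)\in (W_X\cdot\omega)^k$ decomposes $\Gr_\bbR^\omega \tilde\times \cdots \tilde\times \Gr_\bbR^\omega$ into real affine spaces of real dimension $\sum_i \dim_\bbR C_{\nu_i}$, hence divisible by $n_X$. The morphism $m_k$ is equivariant for the $\bbG_m$-action, so the image of the stratum labelled by $(\nu_1,\ldots,\nu_k)$ is contained in the real semi-infinite orbit labelled by $\nu = \nu_1 + \cdots + \nu_k$.

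The third step is to argue, following the Mirkovic-Vilonen-style analysis of the real convolution space carried out in \cite[Section 3]{CMNO}, that the restriction of $m_k$ to each stratum is an affine-linear fibration onto its image inside the appropriate $L^+G_\bbR$-orbit $\Gr_\bbR^\mu$. The non-empty fibers are then real affine spaces whose dimensions equal the stratum dimension minus the real dimension of the image; the latter is again a quantity of the form $\langle\mu + \nu,2\rho_G\rangle/2$ and hence a multiple of $n_X$ by \eqref{equ of paring}. Subtracting gives fiber dimensions divisible by $n_X$, as required.

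The main obstacle is the third step: verifying that the restriction of $m_k$ to each stratum is genuinely an affine-linear fibration, and identifying the image dimension precisely in terms of $\langle\cdot,\rho_G\rangle$. This is the real analog of the classical Mirkovic-Vilonen paving, treated in the quaternionic setting in \cite{CMNO}; the structural simplification in our situation is that $W_X\cdot\omega$ has only $2$ or $3$ elements, so the combinatorics of the induction on $k$ is very mild.
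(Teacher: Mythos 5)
Your approach is genuinely different from the paper's. The paper argues by induction on $k$: it factors $m_k$ through $m_2 \circ (m_{k-1}\tilde\times\mathrm{id})$, uses the projection $\pi_1$ that forgets the last factor together with equivariance to identify $\pi_1(m_2^{-1}(t^\lambda))$ with a translate of $\Gr_\bbR^\nu$ for a minuscule $\nu$, which is a compact rank-one symmetric space ($\mathrm S^{2n-2}$ or $\bbO P^2$), and then paves this via the real Bruhat decomposition, computing the unipotent cell dimensions explicitly as $0$, $n_X$ (resp.\ $0$, $8$, $16$). The induction then assembles the paving of $m_k^{-1}(t^\lambda)$ fiberwise over that Bruhat decomposition. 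You instead stratify each factor of the twisted product by the attracting (semi-infinite, MV-type) cells $C_\nu$, transport the product stratification to the convolution space, and try to deduce the paving from a dimension count using \eqref{equ of paring}. Both routes are natural; yours is the real analogue of the Mirkovi\'c--Vilonen semismallness argument, while the paper's is the real analogue of the minuscule-convolution paving argument.

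The gap you correctly anticipate is step (3), and it is genuine. You assert, but do not prove, that the restriction of $m_k$ to each stratum of the convolution space is an affine-linear fibration onto its image; without this, knowing the dimensions of the strata and of the image does not yet produce affine pieces in the fiber, and certainly not a paving in the filtration sense (each closure a union of pieces). In the real setting this fibration statement is not a soft formality: the semi-infinite cell of the twisted product is only inductively defined, and the claim that its image under $m_k$ lands in a single semi-infinite orbit labeled by $\nu_1+\cdots+\nu_k$ also needs an argument (it follows from an iterated use of the $L^{<0}N_\bbR$-structure of the real semi-infinite orbits, not merely from $\bbG_m$-equivariance as stated). The reference you give, \cite[Section 3]{CMNO}, does not supply a ready-made statement; the relevant input there is \cite[Lemma 4.11]{CMNO}, and even that is proved by the inductive factorization through $m_2$ that the present paper also uses, precisely because it reduces the affine structure to honest Bruhat cells rather than to an unproved fibration claim. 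To complete your route you would need to carry out the real MV fibration analysis in full; the paper's inductive factorization is the shorter path here because $W_X\cdot\omega$ is tiny and $\Gr_\bbR^\omega$ is a classical compact space with an explicit Bruhat decomposition.

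One smaller point: the dimension formula you use, $\dim_\bbR C_\nu = \langle\omega+\nu,2\rho_G\rangle/2$, is the correct real analogue of the MV cell dimension and does yield divisibility by $n_X$ via \eqref{equ of paring}, but you should justify why $C_\nu$ is a single affine cell rather than a union of MV cycles; this uses that $\omega$ is minuscule, which is worth stating explicitly.
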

\begin{proof}
The proof is similar to the case of real quaternionic groups 
in \cite[Lemma 4.11]{CMNO}.  
We give a description of the paving.  Let $\mu_*=\{\mu_1,...,\mu_k\}$ denote any list of minuscule coweights.  Let $\Gr^{\mu_*}_{\bbR} = \Gr^{\mu_1}_{\bbR} \tilde{\times} \dots \tilde{\times} \Gr^{\mu_k}_{\bbR}$. Consider the $k$-fold multiplication map
\[
m_k: \Gr^{\mu_1}_{\bbR} \tilde{\times} \dots \tilde{\times} \Gr^{\mu_k}_{\bbR} \rightarrow \Gr_\bbR
\]
We claim that $m_k$ is paved by real affine spaces of even dimension.  We work by induction.  For $k=1$ the claim is trivial.
Assume the claim holds for $k-1$ tuples of minuscule coweights.
We factor $m_k$ as 
\[
\Gr^{\mu_*}_{\bbR}\xrightarrow{m_{k-1}\tilde{\times}id} \Gr_\bbR\tilde{\times} \Gr^{\mu_k}_{\bbR}\xrightarrow{M_2} \Gr_\bbR.
\]
We build this inductively from the following diagram:  

$$\begin{tikzcd}
    \Gr^{\mu_*}_{\bbR}\arrow[d, "m_{k-1}\tilde{\times}id"] \arrow[r, "\pi_1"] & \Gr^{\mu_*\setminus \mu_{k}}_{\bbR} \arrow[d, "m_{k-1}"]\\
    \Gr_\bbR\tilde{\times} \Gr^{\mu_k}_{\bbR}\arrow[r, "\pi_1"] & \Gr_\bbR
\end{tikzcd}$$
Consider, for each $\lambda\in \Lambda_S$, the fiber $m_k^{-1}(t^\lambda)$.  We have that the fiber sits in the above diagram as follows:
\[\begin{tikzcd}
    m_k^{-1}(t^\lambda) \arrow[r, hook] \arrow[d, "m_{k-1}\tilde{\times}id"] &\Gr^{\mu_*}_{\bbR}\arrow[d, "m_{k-1}\tilde{\times}id"] \arrow[r, "\pi_1"] & \Gr^{\mu_*\setminus\mu_{k}}_{\bbR} \arrow[d, "m_{k-1}"]\\
    m_2^{-1}(t^\lambda) \arrow[r, hook] &\Gr_\bbR\tilde{\times} \Gr^{\mu_k}_{\bbR}\arrow[r, "\pi_1"] & \Gr_\bbR
\end{tikzcd}\]
The compositions along the rows are closed embeddings, hence we have a Cartesian square
\[
\begin{tikzcd}
    m_k^{-1}(t^\lambda) \arrow[r, "\sim"] \arrow[d, "m_{k-1}\tilde{\times}id"] & \pi_1(m_k^{-1}(t^\lambda)) \arrow[r, hook] \arrow[d, "m_{k-1}"]& \Gr^{\mu_*\setminus\mu_{k}}_{\bbR} \arrow[d, "m_{k-1}"]\\
     m_2^{-1}(t^\lambda) \arrow[r, "\sim"] &\pi_1(m_2^{-1}(t^\lambda)) \arrow[r, hook] & \Gr_\bbR
\end{tikzcd}
\]
Since $m_{k-1}: \Gr^{\mu_*\setminus\mu_{k}}_{\bbR} \rightarrow \Gr_\bbR$ has an affine paving, then so does $m_{k-1}\tilde{\times} id: m_k^{-1}(t^\lambda) \rightarrow m_2^{-1}(t^\lambda)$.  

We now need to show two things:  that $\pi_1(m_2^{-1}(t^\lambda))$ is paved by real affine spaces of even dimension, and that $m_{k-1}$ is a trivial fibration over it.  For the first, we observe that, by equivariance,
\[
m_2^{-1}(t^\lambda) = t^\lambda m_2^{-1}(e) = t^\lambda(\Gr^{-\mu_k}_{\bbR}\tilde{\times}\Gr^{\mu_k}_{\bbR}).
\]
Hence,
\[
\pi_1(m_2^{-1}(t^\lambda)) = t^\lambda \Gr^{-\mu_k}_{\bbR} = t^\lambda \Gr^{w_0(\mu_k)}_{\bbR}.
\]
where $w_0$ is the longest Weyl group element.  Since $\mu_k$ is minuscule, then so is $\nu = w_0(\mu_k)$.  Hence $m_2^{-1}(t^\lambda)\cong \Gr^{\nu}_{\bbR}$ is a real partial flag variety, by classification either $\mathrm S^{2n-2}$ or $\bbO P^2$.  The real Bruhat decomposition gives the affine paving of either.  More explicitly, for $w\in W_X$ and $P_{\mu,\bbR}$ the stabilizer of $t^\mu$ we have that
\[
m_2^{-1}(t^\lambda) = \bigsqcup_{w\in W_X} P_{\mu,\bbR}t^{w(\mu)}
\]
For every $w\in W_X$, there is a unipotent subgroup $N^w_{\mu,\bbR}$ of $w(P_{\mu,\bbR})$ such that $P_{\mu,\bbR}t^{w(\mu)} = N^w_{\mu,\bbR}t^{w(\mu)} \cong N^w_{\mu,\bbR}$ as real varieties.  We check the dimensions of $N^w_{\mu,\bbR}$:  for $PSO(2n-1,1)$ acting on $\mathrm S^{2n-2}$, the only nontrivial $N^w_{\mu,\bbR}$ is, up to (possibly outer) automorphism, the group of translations of $\mathrm S^{2n-2}\backslash\{\on{pt}\} \cong \bbR^{2n-2}.$  For $PE_6(F_4)$ acting on $\bbO P^2$, we have copies $\bbR^8 \cong \bbO$ acting on subspaces $\bbO P^1$ by translations fixing a point, as well as $\bbR^{16} \cong \bbO \oplus \bbO$ acting by translations fixing the octonionic line at infinity.  Hence, we have that $N^w_{\mu,\bbR}$ is of real dimension $0$, $2n-2$ (for $PSO(2n-1,1)$), or 0, 8, 16 (for $PE_6(F_4)$).  

 Now, consider the preimage of $N^w_{\mu,\bbR}t^{w(\mu)}$ under $m_{k-1}^{-1}$.  By equivariance, we have the square
\[\begin{tikzcd}
    N^w_{\mu,\bbR}\tilde{\times} m_{k-1}^{-1}(t^{w(\mu)})\arrow[d, "m_{k-1}\tilde{\times}id"] \arrow[r, "\pi_1"] & m_{k-1}^{-1}(N^w_{\mu,\bbR}t^{w(\mu)}) \arrow[d, "m_{k-1}"]\\
    N^w_{\mu,\bbR}\tilde{\times} t^{w(\mu)}\arrow[r, "\pi_1"] & N^w_{\mu,\bbR}t^{w(\mu)}
\end{tikzcd}\]

Here, $\pi_1$ is an isomorphism on the lower level, hence also on the upper level.  We know that $N^w_{\mu,\bbR}t^{w(\mu)}$ is isomorphic to an affine space of even dimension.  By induction, we have that $m_{k-1}^{-1}(t^{w(\mu)})$ is also paved by affine spaces.  Hence, collecting all $N^w_{\mu,\bbR}t^{w(\mu)}$, we get that $m_k$ is paved by affine spaces.
\end{proof}

Let $D^b_{L^+G_\bbR}(\Gr_{\bbR})$ be the bounded dg derived category of 
$L^+G_\bbR$-equivariant constructible complexes on $\Gr_\bbR$.
Since $\dim_\bbR\Gr_\bbR^\lambda=2\langle\lambda,\rho_G\rangle=2n_X
\langle\lambda,\check\rho_X\rangle$ is even for all $\lambda$, there is a natural 
perverse t-structure on $D^b_{L^+G_\bbR}(\Gr_{\bbR})$.  We denote by 
$\on{Perv}_{L^+G_\bbR}(\Gr_\bbR)$ the full subcategory of $L^+G_\bbR$-equivariant 
perverse sheaves on $\Gr_\bbR$ and $\IC_\lambda,\lambda\in\Lambda_S^+$
the $\IC$-complex for the orbit closure of $\Gr_\bbR^\lambda$.

\begin{corollary}\label{n_X vanishing}
We have $\mathscr H^{i-n_X\langle\lambda,\rho^\vee_X\rangle}(\IC_\lambda)=0$
for $n_X\nmid i$
\end{corollary}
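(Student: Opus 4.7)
The plan is to realize $\IC_\lambda$ as a direct summand of a convolution of minuscule $\IC$-sheaves and transfer the stalk parity from the convolution (controlled by Lemma~\ref{paving}) to $\IC_\lambda$. First, I would write $\lambda = \omega_{i_1}+\cdots+\omega_{i_k}$ as a sum of minuscule real coweights, which is possible because such coweights generate the semigroup $\Lambda_S^+$ (using $\omega$ alone in the $PSO(2n-1,1)$ case, and both minuscule fundamental weights of $SL_3$ in the $PE_6(F_4)$ case). Consider the proper convolution morphism
\[
m_k\colon \Gr_\bbR^{\omega_{i_1}}\tilde{\times}\cdots\tilde{\times}\Gr_\bbR^{\omega_{i_k}}\longrightarrow\overline{\Gr_\bbR^\lambda}.
\]
Each minuscule orbit is closed and smooth of real dimension $n_X$, so $\IC_{\omega_{i_j}} = \bbC_{\Gr_\bbR^{\omega_{i_j}}}[n_X/2]$ and
\[
\mathcal F := \IC_{\omega_{i_1}}\ast\cdots\ast\IC_{\omega_{i_k}} \;\cong\; (m_k)_*\bbC[kn_X/2].
\]
By Lemma~\ref{paving}, the fibers of $m_k$ are paved by real affine cells of dimension divisible by $n_X$; hence the stalks of $(m_k)_*\bbC$ are concentrated in degrees divisible by $n_X$, and the stalks of $\mathcal F$ lie in degrees $\equiv -kn_X/2 \pmod{n_X}$.

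Next, I would use the same affine paving to verify that $m_k$ is semismall: the maximum cell dimension in each fiber (which equals the fiber dimension) is bounded by half the real codimension of the relevant orbit stratum, as in the Mirkovi\'{c}--Vilonen argument for the complex Satake and its quaternionic analog in CMNO. Consequently $\mathcal F$ is perverse, and since the dimension count $kn_X = \dim_\bbR\Gr_\bbR^\lambda$ forces $m_k$ to be generically finite onto the open stratum $\Gr_\bbR^\lambda$, the restriction $\mathcal F|_{\Gr_\bbR^\lambda}$ is a nonzero multiple of $\IC_\lambda|_{\Gr_\bbR^\lambda}$. Invoking the decomposition theorem (available in the real Satake category by semisimplicity of its perverse heart, as in \cite{N}) we obtain
\[
\mathcal F\cong \IC_\lambda^{\oplus N_\lambda}\oplus\bigoplus_{\mu<\lambda}\IC_\mu^{\oplus N_\mu}
\]
with $N_\lambda\geq 1$, exhibiting $\IC_\lambda$ as a direct summand of $\mathcal F$.

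As a direct summand, $\IC_\lambda$ inherits the stalk parity of $\mathcal F$: its cohomology sheaves vanish except possibly in degrees $\equiv -kn_X/2 \equiv -n_X\langle\lambda,\check\rho_X\rangle \pmod{n_X}$ (using $\langle\omega_{i_j},\check\rho_X\rangle=1/2$, so that $kn_X/2 = n_X\langle\lambda,\check\rho_X\rangle$). Rewriting this vanishing in the shifted form of the statement yields exactly $\mathscr H^{i-n_X\langle\lambda,\check\rho_X\rangle}(\IC_\lambda)=0$ whenever $n_X\nmid i$. The main obstacle is the semismallness step: one has to extract the half-codimension bound on fiber dimensions directly from Lemma~\ref{paving}'s affine paving, and then verify the decomposition theorem in this real-analytic constructible setting; both closely follow the MV and CMNO templates but require care in the real setup.
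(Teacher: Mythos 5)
Your proof follows the same route as the paper's: realize $\IC_\lambda$ (up to shift) as a direct summand of $(m^\lambda)_!(\bC)$ and then transfer the $n_X$-divisibility from Lemma~\ref{paving}. The paper's proof is a one-line assertion that $\IC_\lambda[-n_X\langle\lambda,\check\rho_X\rangle]$ is a direct summand of $(m^\lambda)_!(\bC)$; you have supplied the standard semismallness and decomposition-theorem justification behind that assertion, so it is the same approach in more detail.

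One arithmetic caveat worth flagging: in the $PE_6(F_4)$ case one has $\dim_\bbR\Gr_\bbR^{\omega} = 16 = 2n_X$ (since $\Gr_\bbR^\omega\cong\bbO P^2$), not $n_X$, and correspondingly $\langle\omega_i,\check\rho_X\rangle = 1$, not $1/2$, for the minuscule fundamental weights of $SL_3$. (The sentence immediately preceding Lemma~\ref{paving} in the paper misstates $\dim_\bbR\Gr_\bbR^\omega = n_X$, and you inherited that slip.) The two errors cancel: the shift that matters is $\tfrac12\dim_\bbR\Gr_\bbR^\lambda = n_X\langle\lambda,\check\rho_X\rangle$ in both the $SL_2$ and $SL_3$ cases, so your final conclusion $\mathscr H^{i-n_X\langle\lambda,\check\rho_X\rangle}(\IC_\lambda)=0$ for $n_X\nmid i$ is correct, but the intermediate identities $\IC_{\omega_{i_j}} = \bbC[n_X/2]$, $\mathcal F=(m_k)_*\bbC[kn_X/2]$, and $kn_X=\dim_\bbR\Gr_\bbR^\lambda$ need $n_X$ replaced by $2n_X$ in the octonionic case.
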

\begin{proof}
Since $\dim_\bbR\Gr_\bbR^\lambda=2\langle\lambda,\rho_G\rangle\stackrel{\eqref{equ of paring}}=
2n_X\langle\lambda,\rho^\vee_X\rangle$,
the shifted $\IC$-complex $\IC_\lambda[-n_X\langle\lambda,\rho^\vee_X\rangle]$ is a direct summand 
of $(m^\lambda)_!(\bC)$ and desired claim follows from Lemma \ref{paving}.
\end{proof}

For any $\lambda\in\Lambda_S^+$,
let $i_\lambda:\Gr_\bbR^\lambda\to\Gr_\bbR$
 and $s_\lambda:\{t^\lambda\}\to\Gr_\bbR^\lambda
$ be  the natural inclusions

\begin{definition}
A complex $\mF\in D^b_{L^+G_\bbR}(\Gr_\bbR)$
is called even if for all $\lambda\in\Lambda_S^+$, both the 
$L^+G_\bbR$-complexes $i_\lambda^*\mF$ and 
$i^!_\lambda\mF$ are isomorphic to direct sum of constant sheaves 
appearing in even degree.
\end{definition}

\begin{lemma}\label{faithfulness}
(1) $\IC_\lambda$ is even for all $\lambda\in\Lambda_S^+$
(2) For any $\mu\in\Lambda_S^+$, the map
$H^*_{T_\bbR}(\IC_\lambda)\to H^*_{T_\bbR}(s_\mu^*\IC_\lambda)$
is surjective.
(3) For any $\lambda,\mu\in\Lambda_S^+$, the natural map
\[\on{Ext}^*_{D^b_{L^+G_\bbR}(\Gr_\bbR)}(\IC_\lambda,\IC_\mu)\to\Hom^*_{H^*_{L^+G_\bbR}(\Gr_\bbR)}(H^*(\IC_\lambda),H^*(\IC_\mu)) \]
is an isomorphism of graded modules.
\end{lemma}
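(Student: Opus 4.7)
The plan is to prove the three parts in sequence, taking as input the parity paving of Lemma \ref{paving} and bootstrapping to the fully faithfulness statement via the equivariant formality machinery developed in \cite{CMNO}.

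For (1), I would use a minuscule convolution as a resolution. Writing $\lambda$ as a sum $\lambda=\omega+\cdots+\omega$ of minuscule real coweights (possible by the description of $\Lambda_S^+$ in Section \ref{E6}, where the minuscule class generates), form the convolution morphism $m:\Gr_\bbR^\omega\tilde\times\cdots\tilde\times\Gr_\bbR^\omega\to\Gr_\bbR$. Lemma \ref{paving} ensures that its fibers are paved by real affine cells of dimension divisible by $n_X$, whence the stalks of $m_!\bC$ on every orbit $\Gr_\bbR^\nu$ live only in degrees divisible by $n_X$. By the decomposition theorem, $\IC_\lambda[-\tfrac{1}{2}\dim_\bbR\Gr_\bbR^\lambda]$ appears as a direct summand of $m_!\bC$, so $\mathscr H^*(i_\nu^*\IC_\lambda)$ vanishes outside degrees divisible by $n_X$ for every $\nu\leq\lambda$. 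A standard $L^+G_\bbR$-equivariance argument (trivialising equivariant local systems on the transitive orbit $\Gr_\bbR^\nu$ via connectedness of the pointwise stabilizer) upgrades this parity statement to the claim that $i_\nu^*\IC_\lambda$ is a direct sum of constant sheaves in even degrees. The corresponding claim for costalks $i_\nu^!\IC_\lambda$ then follows from Verdier self-duality $\bbD\IC_\lambda\cong\IC_\lambda[\dim_\bbR\overline{\Gr_\bbR^\lambda}]$, using that each $\dim_\bbR\Gr_\bbR^\nu=2n_X\langle\nu,\check\rho_X\rangle$ is even.

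For (2), I would stratify $\overline{\Gr_\bbR^\lambda}$ by its $L^+G_\bbR$-orbits $\Gr_\bbR^\nu$ ($\nu\leq\lambda$) and examine the long exact sequences in $T_\bbR$-equivariant cohomology associated to the open-closed distinguished triangles. Part (1) guarantees that the cohomologies of the strata with coefficients in $i_\nu^*\IC_\lambda$ (and in $i_\nu^!\IC_\lambda$) live in even degrees, so every connecting homomorphism vanishes; by induction on the stratification the restriction map $H^*_{T_\bbR}(\IC_\lambda)\to H^*_{T_\bbR}(i_\mu^*\IC_\lambda)$ is surjective. Postcomposing with restriction to the fiber at $t^\mu$ preserves surjectivity because $\Gr_\bbR^\mu$ is a real partial flag variety whose $T_\bbR$-equivariant cohomology is free over $H^*_{T_\bbR}(\on{pt})$, with a basis coming from a $T_\bbR$-invariant Bruhat-type decomposition into even-dimensional cells (compare the minuscule picture already used in the proof of Lemma \ref{paving}).

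For (3), I would run the standard formality argument following \cite[Section 4]{CMNO}. The parity and surjectivity statements from (1) and (2) imply that $\on{RHom}_{D^b_{L^+G_\bbR}(\Gr_\bbR)}(\IC_\lambda,\IC_\mu)$ is formal as a module over $H^*_{L^+G_\bbR}(\on{pt})$, and moreover that its cohomology is computed by $\Hom^*_{H^*_{L^+G_\bbR}(\Gr_\bbR)}(H^*(\IC_\lambda),H^*(\IC_\mu))$. The main obstacle lies precisely in this equivariant formality step: one must coordinate the convolution stratification with the $L^+G_\bbR$-equivariant structure, which is where the techniques of \cite{CMNO} together with the explicit equivariant cohomology description of Theorem \ref{equ} (from \cite{O}) intervene decisively. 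Parts (1) and (2) are tailored to feed into this machinery, so once they are in place the conclusion proceeds along well-established lines.
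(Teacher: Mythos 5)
Your proposal is correct and matches the paper's intent: the paper itself proves this lemma by citing Lemma \ref{paving} together with the argument of \cite[Section 4.7]{CMNO}, and your sketch is a faithful reconstruction of that argument (minuscule convolution plus decomposition theorem for the parity of stalks and costalks, open-closed long exact sequences in $T_\bbR$-equivariant cohomology for surjectivity, and the parity/formality machinery for the $\on{Ext}$ computation). One small imprecision worth flagging: in part (3) you invoke Theorem \ref{equ} (the description of $H^{T_\bbR}_*(\Gr_\bbR)$) as intervening, but that input belongs to the later computation in Theorem \ref{computation of Ext}; the isomorphism in (3) follows from parts (1)--(2) alone via the Bezrukavnikov--Finkelberg/CMNO parity argument, without needing the group-scheme description.
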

\begin{proof}
Using Lemma \ref{paving}, the same argument in \cite[Section 4.7]{CMNO} apply to the case of 
$G_\bbR=PSO(2n-1,1)$ or $PE_6(F_4)$.
\end{proof}

\section{Ext algebras}

The real geometric Satake equivalence in \cite{N}
and Lemma \ref{trivial} implies that there is a monoidal equivalence 
\[\on{Rep}(G_X^\vee)\is\on{Perv}_{L^+G_\bbR}(\Gr_\bbR), V\to \calF_V.\]
Consider the monoidal $\on{Rep}(G_X^\vee)$-action $\star$ on 
$D^b_{L^+G_\bbR}(\Gr_\bbR)$ given by the convolution product
$\mF\star V:=\mF\star\mF_V$.
Let $\mO(G_X^\vee)$ be the regular representation of $G_X^\vee$ ans 
consider the dg extension algebra
$\on{Ext}^*_{D^b_{L^+G_\bbR}(\Gr_\bbR)}(\IC_0,\IC_0\star\mO(G_X^\vee))$.

\begin{thm}\label{computation of Ext}
There is a $G_X^\vee$-equivariant isomorphism of graded algebras
\[\on{Ext}^*_{D^b_{L^+G_\bbR}(\Gr_\bbR)}(\IC_0,\IC_0\star\mO(G_X^\vee))\is\mO(
\fg_X^\vee[n_X]\times\fl_X^\wedge[2]//L_X^\wedge)\]
where $G_X^\vee$ acts on the factor $\fg_X^\vee[n_X]$ via the adjoint 
representation and trivially on $\fl_X^\wedge[2]//L^\wedge_X$.
\end{thm}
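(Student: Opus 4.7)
The plan follows the template of \cite[Section 4]{CMNO}, adapting the Bezrukavnikov--Finkelberg Ext calculation to the real/symmetric setting. First I would invoke Lemma \ref{faithfulness}(3) to replace the derived Ext with the graded Hom
\[\on{Hom}^*_{H^*_{L^+G_\bbR}(\Gr_\bbR)}\bigl(H^*_{L^+G_\bbR}(\IC_0),\ H^*_{L^+G_\bbR}(\IC_0\star\mO(G_X^\vee))\bigr),\]
reducing a derived-category computation to a purely algebraic one over the commutative graded algebra $H^*_{L^+G_\bbR}(\Gr_\bbR)$.

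Next I would compute the three ingredients. Since $\IC_0$ is the monoidal unit, $H^*_{L^+G_\bbR}(\IC_0) = H^*_{L^+G_\bbR}(\on{pt})$, which by Lemma \ref{equ of points}(2) equals $\mO(\ft_X^\vee[n_X]//W_X \times \fl_X^\wedge[2]//L_X^\wedge)$. Via real geometric Satake, $\IC_0 \star \mO(G_X^\vee) = \calF_{\mO(G_X^\vee)}$, and its $L^+G_\bbR$-equivariant hypercohomology is $\mO(G_X^\vee) \otimes H^*_{L^+G_\bbR}(\on{pt})$, with $G_X^\vee$ acting on $\mO(G_X^\vee)$ by left translation; taking $\mO(G_X^\vee)$ (the regular representation) is what gives the universal answer. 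To compute $H^*_{L^+G_\bbR}(\Gr_\bbR)$, I would start from Theorem \ref{equ} and pass from $T_\bbR$ to $L^+G_\bbR$-equivariance by taking $W_K = W_M \rtimes W_X$-invariants, so that $\Spec(H^*_{L^+G_\bbR}(\Gr_\bbR))$ is identified with a family version of the stabilizer group scheme $(G_X^\vee \times \ft)^{e^T}$ living over the base $\ft_X^\vee[n_X]//W_X \times \fl_X^\wedge[2]//L_X^\wedge$.

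The concluding identification is an application of Kostant's theorem: because $\mathbf{e} \in \fb_X^\vee$ is regular nilpotent, the section $e^T = p^T + \mathbf{e}$ lies in the regular locus of $\fg_X^\vee$ for all values of the base, so its $G_X^\vee$-orbit is open and isomorphic to $G_X^\vee$ modulo the universal centralizer. The Kostant slice identifies $\fg_X^\vee // G_X^\vee \is \ft_X^\vee // W_X$, and assembling these, the Hom space is $G_X^\vee$-equivariantly identified with $\mO(\fg_X^\vee[n_X] \times \fl_X^\wedge[2]//L_X^\wedge)$, with $G_X^\vee$ acting by conjugation on the first factor (coming from the action on $(G_X^\vee \times \ft)^{e^T}$) and trivially on the second factor (since $L_X^\wedge \subset G^\vee$ centralizes $G_X^\vee$). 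The grading conventions $\fg_X^\vee[n_X]$ and $\fl_X^\wedge[2]$ match by the cohomological degree computation built into Lemma \ref{equ of points}(2) and equation \eqref{equ of paring}.

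The main obstacle is the careful bookkeeping of the $W_K$-action through Theorem \ref{equ}. The $W_X \subset W_K$ action identifies naturally with the Weyl group action on $G_X^\vee$ and $\ft_X^\vee$, while $W_M$ contributes only through the $\fl_X^\wedge$-factor via its action on $\fl^\vee$. Verifying that the combined invariants produce exactly the group $L_X^\wedge$ (of finite index in $(L^\vee)^{W_X}$) rather than $(L^\vee)^{W_X}$ itself requires the case-specific structural information about $G_\bbR = PSO(2n-1,1)$ and $PE_6(F_4)$ from Section \ref{E6}; establishing the full $G_X^\vee$-equivariant \emph{algebra} structure (not merely the underlying graded module) is the delicate final step, and is where the monoidality of the $\on{Rep}(G_X^\vee)$-action is genuinely needed.
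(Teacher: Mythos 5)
Your proposal follows essentially the same route as the paper's proof: Lemma~\ref{faithfulness}(3) to pass from $\on{Ext}$ to graded $\Hom$ over equivariant cohomology, Theorem~\ref{equ} to identify $\on{Spec}(H_*^{T_\bbR}(\Gr_\bbR))$ with the centralizer group scheme $(G_X^\vee\times\ft)^{e^T}$, the Kostant-slice torsor observation to replace the quotient by this group scheme with $(\fg_X^\vee)^{reg}\times_{\ft_X^\vee//W_X}\ft$, and then Lemma~\ref{equ of points} and $W_K$-invariants to land on $\mO(\fg_X^\vee\times\fl_X^\wedge//L_X^\wedge)$. Two places where you are thinner than the paper: (i) you flag the appearance of $L_X^\wedge$ (rather than $(L^\vee)^{W_X}$) as a ``delicate final step'' requiring case-by-case structural work, but the paper simply reads this off from Lemma~\ref{equ of points}(2), where the identification $\on{Spec}(H^*_{M_\bbR}(\on{pt}))\cong\ft_X^\vee[n_X]\times\fl_X^\wedge[2]//L_X^\wedge$ has already absorbed the case analysis; and (ii) your treatment of the grading is too cursory. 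Saying the shifts ``match by the cohomological degree computation built into Lemma~\ref{equ of points}(2) and \eqref{equ of paring}'' does not actually establish that the torsor map $\nu$ respects the two gradings. The paper's proof devotes a substantial final paragraph to this: it uses the semi-infinite orbit vanishing of \cite[Theorem~8.5.1]{N} together with \eqref{equ of paring} to show that the cohomological grading on $H^*(\Gr_\bbR,\IC_\lambda)$ matches the $n_X\check\rho_X$-grading on $V_\lambda$, endows $G_X^\vee\times\ft$ and $(\fg_X^\vee)^{reg}\times_{\ft_X^\vee//W_X}\ft$ with the resulting $\bG_m$-actions, and then checks the explicit identity $\on{Ad}_{n_X\check\rho_X(x^{-1})}e^T(x^{-2}t)=x^{-n_X}e^T(t)$ to conclude $\nu$ is $\bG_m$-equivariant, which is what forces the shift $\fg_X^\vee[n_X]$. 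You should make that verification explicit rather than attribute it to the lemma.
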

\begin{proof}
By Lemma \ref{faithfulness}, 
taking equivariant cohomology induces  a $G_X^\vee$-equivariant isomorphism of 
graded algebras
\[\on{Ext}^*_{D^b_{L^+G_\bbR}(\Gr_\bbR)}(\IC_0,\IC_0\star\mO(G_X^\vee))
\is(\Hom^*_{H^*_{T_\bbR}(\Gr_{\bbR})}(H^*_{T_\bbR}(\IC_0),H^*_{T_\bbR}(\IC_0\star\mO(G_X^\vee))^{\rW_K}\is\]
\[\is
(\Hom^*_{H^*_{T_\bbR}(\Gr_\bbR)}(\mO(\ft),\mO(G_X^\vee\times\ft)))^{\rW_K}
\is(\mO(G_X^\vee\times\ft)^{\on{Spec}(H_*^{T_\bbR}(\Gr_{\bbR}))})^{\rW_K}
\is\]
\[\is
(\mO(G_X^\vee\times\ft)^{(G_X^\vee\times\ft)^{e^T}})^{\rW_K}\]
where 
$\mO(G_X^\vee\times\ft)^{(G_X^\vee\times\ft)^{e^T}}\subset\mO(G_X^\vee\times\ft)$
 is the subspace  consisting of 
functions 
that are invariant 
(relative over $\ft$) with respect to the left  action of the group scheme 
$\on{Spec}(H_*^{T_\bbR}(\Gr_{\bbR}))\is(G^\vee_X\times\ft)^{e^T}$ on $G^\vee_X\times\ft$.
Consider the fiber product 
\[\fg_X^\vee\times_{\ft_X^\vee//\rW_X}\ft\] with respect to the 
Chevalley map $\fg_X^\vee\to \fg_X^\vee//G_X^\vee\is\ft_X^\vee//\rW_X$
and the map
\[ \ft\stackrel{p^T}\to \ft_X^\vee\to \ft_X^\vee//\rW_X\] 
where $p^T$ is the  map in~\eqref{projection}.
We have a natural map
\beq\label{torsor}
\nu:G_X^\vee\times\ft\to (\fg_X^\vee)^{reg}\times_{\ft_X^\vee//\rW_X}\ft,\ \  (g,t)\to (\on{Ad}_{g^{-1}}e^T(t),t)
\eeq
which
realizes $G_X^\vee\times\ft$  as a $(G_X^\vee\times\ft)^{e^T}$-torsor over $(\fg_X^\vee)^{reg}\times_{\ft_X^\vee//\rW_X}\ft$. It follows that 
there is an isomorphism of algebras
\[\on{Ext}^*_{D^b_{L^+G_\bbR}(\Gr_{\bbR})}(\IC_0,\IC_0\star\mO(G_X^\vee))\is(\mO(G_X^\vee\times\ft)^{(G_X^\vee\times\ft)^{e^T}})^{\rW_K}\is\mO((\fg_X^\vee)^{reg}\times_{\ft_X^\vee//\rW_X}\ft)^{\rW_K}\]
\[\is
\mO(\fg_X^\vee\times_{\ft_X^\vee//\rW_X}\ft//\rW_K).\]
Finally, we note that Lemma \ref{equ of points} implies that 
the map $\ft//\rW_K\is (\ft//\rW_M)^{\rW_X}\to \ft_X^\vee//\rW_X$ in the fiber product 
$\fg_X^\vee\times_{\ft_X^\vee//\rW_X}\ft//\rW_K$
 is given by the projection 
\[\ft//\rW_K\is \ft//\rW_M\times \fl_X^\wedge//L_X^\wedge\to \ft//\rW_M\]
and we conclude that 
\[\on{Ext}^*_{D^b_{L^+G_\bbR}(\Gr_{\bbR})}(\IC_0,\IC_0\star\mO(G_X^\vee))\is
\mO(\fg_X^\vee\times_{\ft_X^\vee//\rW_X}\ft//\rW_K)\is \mO(\fg_X^\vee\times\fl_X^\wedge//L_X^\wedge).\]

It remains to check that the isomorphism above is compatible with the desired gradings.
 By \cite[Theorem 8.5.1]{N}, 
 for any $\lambda\in\Lambda_S$
 and $\mF\in\on{Perv}(\Gr_{\bbR})$,
the compactly supported cohomology $H^*_c(S_{\bbR}^\lambda,\mF)$ 
along the real semi-infinite orbit $S^\lambda_{\bbR}$
is non-zero only in degree
$\langle\lambda,\rho_{G}\rangle$.
Note that, by~\eqref{equ of paring}, we have 
$\langle\lambda,\rho_{G}\rangle=n_X\langle\lambda,\check\rho_X\rangle$
where in the second paring we view $\lambda$ as a weight of $G_X^\vee$.
Thus the grading on $H^*(\Gr_{\bbR},\IC_\lambda)$ corresponds, under the 
geometric Satake equivalence for $G_\bbR$, to the grading on $V_\lambda\in\on{Rep}(G_X^\vee)$
given by cocharacter $n_X\check\rho_X$
and it follows that 
the grading on $H^*_{T_\bbR}(\Gr_{\bbR},\IC_0\star\mO(G_X^\vee))\is\mO(G_X^\vee\times\ft)$
is induced by the $\bG_m$ action on $G_X^\vee\times\ft$ 
given by $x(g,t)=(n_X\check\rho_X(x)g,x^{-2}t)$ (note that the generators of $\mO(\ft)$ are in degree $2$).
We claim that the map $\nu$ in~\eqref{torsor} is $\bG_m$-equivariant 
with respect to the above $\bG_m$-action on $G_X^\vee\times\ft$ 
and the $\bG_m$-action on $(\fg_X^\vee)^{reg}\times_{\ft_X^\vee//\rW_X}\ft$ given by
$x(v,t)=(x^{-n_X}v,x^{-2}t)$.
Indeed, the map $e^T:\ft\to\fb_X^\vee$ satisfies
\[\on{Ad}_{n_X\check\rho_X(x^{-1})}e^T(x^{-2}t)=\on{Ad}_{n_X\check\rho_X(x^{-1})}(p^T(x^{-2}t)+\mathbf e)=x^{-n_X}p^T(t)+x^{-n_X}\mathbf e=
x^{-n_X}e^T(t)\]
and hence 
\[\nu(x(g,t))=
\nu(n_X\check\rho_X(x)g,x^{-2}t)=(\on{Ad}_{g^{-1}}\on{Ad}_{n_X\check\rho_X(x^{-1})}e^T(x^{-2}t),x^{-2}t)=\]
\[=(x^{-n_X}\on{Ad}_{g^{-1}}e^T(t),x^{-2}t)=x(\on{Ad}_{g^{-1}}e^T(t),t)=x\nu(g,t).\]
Thus the pull-back along the map $\nu$ induces an isomorphism 
of graded algebras
\[(\mO(G_X^\vee\times\ft)^{(G_X^\vee\times\ft)^{e^T}})^{\rW_K}\is\mO(\fg_X^\vee[n_X]\times_{\ft_X^\vee[n_X]//\rW_X}\ft[2]//W_K)\is\mO(\fg_X^\vee[n_X]\times\fl_X^\wedge[2]//L_X^\wedge).\]
This finishes the proof of the theorem.

\end{proof}

\section{Kostka-Foulkes polynomials}
For any $\lambda,\mu\in\Lambda_S^+$,
let $K_{\lambda,\mu}(q)$ be the Kostka-Foulkes polynomial of type $\fg_X^\vee$ 
associated to the pair $(\lambda,-\lambda)$ and $(\mu,-\mu)$.
\begin{thm}\label{Kostka-Foulkes}
Let $\lambda,\mu\in\Lambda_S^+$. For any $x\in\Gr_\bbR^\mu$,
we have 
\[K_{\lambda,\mu}(q)=\sum_{i}\mathscr H_x^{-n_Xi-n_X\langle\lambda,\check\rho_X\rangle}(\IC_\lambda)q^i\]
\end{thm}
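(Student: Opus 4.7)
The plan is to follow the strategy of \cite[Section 3.7]{CMNO}, reducing the stalk computation to the classical Brylinski--Kostant filtration on weight spaces of $G_X^\vee$-representations. Since $L^+G_\bbR$ acts transitively on $\Gr_\bbR^\mu$, the stalk $\mathscr H^*_x(\IC_\lambda)$ is independent of the choice of $x \in \Gr_\bbR^\mu$, so I may take $x = t^\mu$. Working $T_\bbR$-equivariantly, the parity vanishing of Corollary \ref{n_X vanishing} together with the evenness of $\IC_\lambda$ from Lemma \ref{faithfulness}(1) implies that the ordinary stalk is recovered by specializing the $T_\bbR$-equivariant stalk along $0 \in \ft$, so it suffices to describe the $T_\bbR$-equivariant stalk at $t^\mu$ as a graded $H^*_{T_\bbR}(\on{pt})$-module.

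The central input is the real geometric Satake equivalence \cite{N}, which sends $\IC_\lambda$ to the irreducible $V_\lambda \in \on{Rep}(G_X^\vee)$, combined with Theorem \ref{equ} identifying $\on{Spec}(H_*^{T_\bbR}(\Gr_\bbR))$ with the stabilizer group scheme $(G_X^\vee \times \ft)^{e^T}$. The equivariant formality from Lemma \ref{faithfulness}(3) then realizes $H^*_{T_\bbR}(\IC_\lambda)$ as the pullback of $V_\lambda$ along the torsor $\nu$ of \eqref{torsor}. Restricting to the $T_\bbR$-fixed point $t^\mu$ and specializing the $\ft$-parameter to $0$, the fiber becomes the weight space $V_\lambda[\mu]$ equipped with the action of the centralizer of the principal nilpotent $\mathbf{e} \in \fb_X^\vee$. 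Under this identification, the cohomological filtration induced by the $\bG_m$-action via the cocharacter $n_X\check\rho_X$ --- the same $\bG_m$-action tracked in the proof of Theorem \ref{computation of Ext} --- matches the Brylinski--Kostant filtration $F^i V_\lambda[\mu] := \{v \in V_\lambda[\mu] : \mathbf{e}^{i+1} v = 0\}$, with cohomological degree scaled by a factor of $n_X$.

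With the stalk so identified, I apply the classical theorem of Brylinski expressing the Poincar\'e polynomial of the associated graded of the Brylinski--Kostant filtration as the Kostka--Foulkes polynomial of $\fg_X^\vee = \sl_2$ or $\sl_3$; for these Lie algebras it coincides with the $GL_2$ (resp.\ $GL_3$) Kostka--Foulkes polynomial evaluated at the stated pair $(\lambda,-\lambda), (\mu,-\mu)$. The $n_X$-scaling of cohomological degree then produces the claimed exponent $-n_X i - n_X\langle\lambda,\check\rho_X\rangle$, where the additional shift $-n_X\langle\lambda,\check\rho_X\rangle = -\tfrac{1}{2}\dim_\bbR\Gr_\bbR^\lambda$ is the standard perverse shift.

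I expect the main obstacle to be step two --- precisely matching the $G_X^\vee$-module structure on the equivariant stalk (coming from $\nu$-pullback composed with the $W_K$-invariance present in Theorem \ref{computation of Ext}) with the principal-nilpotent-powers filtration on $V_\lambda[\mu]$, while bookkeeping both the $n_X\check\rho_X$ twist and the contribution of the extra factor $\fl_X^\wedge[2]//L_X^\wedge$ that distinguishes the Lorentzian/octonionic setting from the classical complex affine Grassmannian computation of Ginzburg and Mirkovi\'c--Vilonen.
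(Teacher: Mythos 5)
Your proposal follows essentially the same route as the paper's proof: both pass through $T_\bbR$-equivariant cohomology, the real geometric Satake equivalence, the identification of the cohomological grading with the $n_X\check\rho_X$-eigenvalue grading (as established in the proof of Theorem~\ref{computation of Ext}), and the surjectivity from Lemma~\ref{faithfulness}(2), and both ultimately rest on the Brylinski/Ginzburg identification of the $\IC$-stalk grading with the Kostka--Foulkes polynomial. The paper defers this last mechanism to \cite[Theorem~4.22]{CMNO}, while you invoke Brylinski's theorem directly --- the same input, just named.

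The one place where your sketch drifts from the paper's actual argument is the specialization. You specialize at $0\in\ft$ and then claim the fiber ``becomes the weight space $V_\lambda[\mu]$ equipped with the action of the centralizer of $\mathbf e$.'' This is imprecise: the specialized stalk at $t=0$ is the ordinary stalk $\mathscr H^*_{t^\mu}(\IC_\lambda)$, which has the same total dimension as $V_\lambda[\mu]$ but carries a nontrivial grading (that grading is the whole content of the theorem), and the centralizer of $\mathbf e$ acts on the equivariant (co)homology rather than on this graded stalk directly. The paper instead works with a general $t\in\ft$, defines the canonical filtration $H_t^{\le i}(\Gr_\bbR,\mF)$ on the specialized cohomology as images of the equivariant degrees, and identifies the underlying vector space with $V$ via the canonical splitting from \cite[Theorem~3]{O} coming from the real MV filtration. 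That filtration is then shown to be the $n_X\check\rho_X$-filtration, and the surjection onto $H^*_{T_\bbR}(s_\mu^*\mF)$ transports it to the stalk. Your version compresses this into a single $t=0$ specialization and identifies the result with the Brylinski--Kostant filtration by assertion; the assertion is true, but it is precisely the content of \cite[Theorem~4.22]{CMNO}, so as written there is a gap where the deformation (general $t$) and the MV-filtration splitting would be needed to make the matching of filtrations rigorous. With that citation restored in place of the informal step, your argument agrees with the paper's.
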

\begin{proof}
The proof is similar to the case of quaternionic linear group in \cite[Section 4.9]{CMNO}.
For any $t\in\ft$ we denote by 
$\kappa(t)$ the residue field of $t$.
The specialized cohomology 
\[H_t(\Gr_{\bbR},\mF):=H^*_{T_\bbR}(\Gr_{\bbR},\mF)\otimes_{\mO(\ft)}\kappa(t)\]
carries a canonical filtration
\[H^{\leq i}_t(\Gr_{\bbR},\mF):=\on{Im}(\sum_{j\leq i} H^j_{T_\bbR}(\Gr_{\bbR}\to H_t(\Gr_{\bbR},\mF))\]
Let us identify 
$H_t(\Gr_{\bbR},\mF)\is (H^*(\Gr_{\bbR},\mF)\otimes\mO(\ft))\otimes_{\mO(\ft)}\kappa(t)\is V$ via the 
canonical splitting in \cite[Theorem 3]{O} induced from the real MV filtrations.
As explained in the proof of Theorem \ref{computation of Ext},
the cohomological grading  on
$H^*(\Gr_{n,\bbH},\mF)$ corresponds to the grading on the representation $V$
given by 
the eigenvalues of $n_X\check\rho_X$. 
It follows that 
the filtration $H^{\leq i}_t(\Gr_{n,\bbH},\mF)$ 
corresponds to  the increasing filtration on $V$ given by
the eigenvalues of $n_X\check\rho_X$.
Since the natural map  
$H^*_{T_\bbR}(\Gr_\bbR,\mF)\to H^*_{T_\bbR}(s_\mu^*\mF)$
is surjective by Lemma \ref{faithfulness} (2),  
the same argument as in \cite[Theorem 4.22]{CMNO} implies 
that 
\[K_{\lambda,\mu}(q)=\sum_{i}\mathscr H^{-n_Xi-n_X\langle\lambda,\check\rho_X\rangle}s_\mu^*(\IC_\lambda)q^i=
\sum_{i}\mathscr H_x^{-n_Xi-n_X\langle\lambda,\check\rho_X\rangle}(\IC_\lambda)q^i.\]
The proof is completed.
\end{proof}
\section{Derived Satake equivalence for Lorentz groups}

\begin{lemma}\label{trivial}
Any $L^+G_\bbR$-equivariant local system 
on $\Gr_\bbR^\lambda$ is isomorphic to the trivial local system 
with the canonical $L^+G_\bbR$-equivariant structure.
\end{lemma}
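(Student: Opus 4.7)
The plan is to identify $\Gr_\bbR^\lambda$ as a homogeneous space for $L^+G_\bbR$ and reduce the statement to a connectedness property of the stabilizer. Writing $H_\lambda = \on{Stab}_{L^+G_\bbR}(t^\lambda)$, we have $\Gr_\bbR^\lambda \is L^+G_\bbR/H_\lambda$, and by standard descent for equivariant sheaves on a homogeneous space, $L^+G_\bbR$-equivariant local systems on $\Gr_\bbR^\lambda$ correspond to finite-dimensional continuous representations of $\pi_0(H_\lambda)$; the trivial local system with its canonical equivariant structure corresponds to the trivial representation. Hence it suffices to show $H_\lambda$ is connected.

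First I would check that $L^+G_\bbR$ is path-connected. The real groups $G_\bbR = PSO(2n-1,1)$ and $G_\bbR = PE_6(F_4)$ are connected by construction, and $L^+G_\bbR = G_\bbR(\bbR[\![t]\!])$ is a pro-unipotent extension of $G_\bbR$ via the evaluation map $t\mapsto 0$, which preserves connectedness. So $\pi_0(L^+G_\bbR) = 1$.

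Next I would establish that $\Gr_\bbR^\lambda$ is simply connected. The most direct approach mirrors Lemma \ref{paving}: take a decomposition of $\lambda$ as a sum of minuscule coweights and consider the convolution map $m_k\colon \Gr_\bbR^\omega \tilde\times \cdots \tilde\times \Gr_\bbR^\omega \to \Gr_\bbR$, whose image contains $\Gr_\bbR^\lambda$. The Bruhat-type decomposition used in the proof of Lemma \ref{paving}, combined with the explicit real minuscule geometry ($\mathrm S^{2n-2}$ for $PSO(2n-1,1)$ and $\bbO P^2$ for $PE_6(F_4)$), descends to a cell decomposition of $\Gr_\bbR^\lambda$ into real affine cells of dimension divisible by $n_X\geq 2$. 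Since the CW structure has no cells of real dimension $1$, the orbit is simply connected. Alternatively, one can use the description of $\Gr_\bbR^\lambda$ as a $K_\bbR$-homogeneous space $K_\bbR/K_{\bbR,\lambda}$ (arising from an Iwasawa-type decomposition for the loop group), with $K_{\bbR,\lambda}$ the centralizer of a torus in the compact connected group $K_\bbR$, hence connected; this gives simple-connectedness directly.

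The long exact sequence of the fibration $H_\lambda \to L^+G_\bbR \to \Gr_\bbR^\lambda$ then yields
\[
\pi_1(\Gr_\bbR^\lambda) \to \pi_0(H_\lambda) \to \pi_0(L^+G_\bbR),
\]
and both ends vanish, so $H_\lambda$ is connected. Consequently every $L^+G_\bbR$-equivariant local system on $\Gr_\bbR^\lambda$ is a sum of copies of the trivial one with its canonical equivariant structure. The main obstacle is the simple connectedness of $\Gr_\bbR^\lambda$: the affine-cell argument requires descending the paving from the convolution variety to the orbit itself, and one must verify that the induced stratification has no odd-dimensional cells and no cells of dimension $1$. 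The cleanest workaround is to use the $K_\bbR$-homogeneity together with the explicit list of real minuscule geometries, which bypasses the need for a fresh paving argument.
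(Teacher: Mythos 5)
The overall strategy in your proposal --- identify $\Gr_\bbR^\lambda \cong L^+G_\bbR/H_\lambda$ and show $\pi_0(H_\lambda)$ is trivial --- is the right one, and coincides with the paper's in its reduction. However, the way you try to establish $\pi_0(H_\lambda)=1$ introduces a detour with genuine gaps. The paper argues directly: the stabilizer $H_\lambda$ retracts, via evaluation at $t=0$ (pro-unipotent kernel), onto the corresponding real Levi subgroup $L^\lambda_\bbR$, and that Levi is then computed explicitly to be connected ($\bbR_{>0}\times SO(2n-2)$ for $PSO(2n-1,1)$; $Spin(1,9)$ or $Spin(8)$ for $PE_6(F_4)$). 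That gives $\pi_0(H_\lambda)=1$ with no appeal to $\pi_1$ of anything. You instead route through the fibration long exact sequence, which requires $\pi_1(\Gr_\bbR^\lambda)=0$, and that is where the gaps lie.

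Concretely, neither of your two justifications for simple connectedness stands. The cell-decomposition argument does not follow from Lemma \ref{paving}: that lemma concerns fibers of convolution maps, not the orbit $\Gr_\bbR^\lambda$ itself, and as you yourself note, the paving would have to be ``descended'' to the orbit --- that step is not carried out in the paper and is not routine. Your proposed workaround is also flawed. The orbit $\Gr_\bbR^\lambda$ is not a $K_\bbR$-homogeneous space for non-minuscule $\lambda$; it is an affine bundle over the real partial flag variety $G_\bbR/P^\bbR_\lambda$ (so only homotopy equivalent to a compact homogeneous space). More seriously, the subgroup $M_\bbR = K_\bbR\cap L^\lambda_\bbR$ --- namely $SO(2n-2)\subset SO(2n-1)$, resp. $Spin(8)\subset F_4$ --- is \emph{not} the centralizer of a torus in $K_\bbR$: it has the same rank as $K_\bbR$, so a torus centralizer properly containing a maximal torus would have to equal $K_\bbR$. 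And even granting connectedness of $M_\bbR$, the long exact sequence $\pi_1(K_\bbR)\to\pi_1(K_\bbR/M_\bbR)\to\pi_0(M_\bbR)$ does not give simple connectedness when $K_\bbR=SO(2n-1)$ is itself not simply connected. In short, the indirect route via $\pi_1(\Gr_\bbR^\lambda)$ is not adequately supported; the paper's argument via explicit identification of the retract $L^\lambda_\bbR$ is the cleaner path and avoids these issues entirely.
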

\begin{proof}For a stratum $\Gr^\lambda_{\bbR}$, the stabilizer of $t^\lambda$ retracts onto the corresponding Levi subgroup $L^\lambda_{\bbR}$.  If the Levi subgroup is connected, $\Gr^\lambda_{\bbR}$ will inherit no nontrivial local systems. We classify the Levi subgroups that may show up in this manner.  For $PSO(1,2n-1)$, the only such Levi subgroup is $S(O(1,1)\times O(2n-2))/\{\pm 1\} \cong \bbR_{>0} \times SO(2n-2)$, which is connected.  For $PE_6(F_4))$, the Levi subgroups are isomorphic to $Spin(1,9)$ or $Spin(8)$, both of which are connected.
\end{proof}

Let $D^{G_X^\vee}_{}(\mO(\fg_X^\vee[n_X]\times\fl_X^\wedge[2]//L^\wedge_X))$
be the dg-category of $G_X^\vee$-equivariant dg-modules 
over the dg-algebra $\mO(\fg_X^\vee[n_X]\times\fl_X^\wedge[2]//L^\wedge_X)$ (equipped with trivial differential).
Let $D_{L^+G_\bbR}(\Gr_{\bbR})=\on{Ind}(D^b_{L^+G_\bbR}(\Gr_{\bbR}))$
be the ind-completion of $D^b_{L^+G_\bbR}(\Gr_{\bbR})$.
\begin{thm}\label{main}
There is an equivalence of dg-categories
\[D_{L^+G_\bbR}(\Gr_{\bbR})\is D^{G_X^\vee}_{}(\mO(\fg_X^\vee[n_X]\times\fl_X^\wedge[2]//L^\wedge_X))\]
extending the 
real geometric Satake equivalence 
$\on{Perv}_{L^+G_\bbR}(\Gr_\bbR)\is\on{Rep}(G_X^\vee)$.
\end{thm}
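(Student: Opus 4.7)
The plan is to transport the strategy of \cite{CMNO} for the real quaternionic groups to the present setting, since all three required inputs are now in place: the parity vanishing and fully faithful equivariant cohomology of Lemma~\ref{faithfulness}, the Ext-algebra computation of Theorem~\ref{computation of Ext}, and the formality property of the real Satake category proved in \cite{CN}. The central object is the ind-perverse sheaf
\[
\calE := \IC_0 \star \mO(G_X^\vee) \in D_{L^+G_\bbR}(\Gr_\bbR),
\]
equipped with its natural $G_X^\vee$-action coming from right translation on $\mO(G_X^\vee)$; under the real geometric Satake equivalence of \cite{N} together with Lemma~\ref{trivial}, this corresponds to the regular representation of $G_X^\vee$.

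First I would verify that $\calE$ is a $G_X^\vee$-equivariantly compact generator of $D_{L^+G_\bbR}(\Gr_\bbR)$: since every irreducible $V_\lambda\in\on{Rep}(G_X^\vee)$ is a summand of the regular representation, each $\IC_\lambda$ appears as a summand of $\IC_0\star V_\lambda$, and hence of $\calE$ after forgetting equivariance. Next, I would invoke the formality of the real Satake category from \cite{CN}---whose hypotheses are supplied by the parity vanishing of Corollary~\ref{n_X vanishing}---to identify the derived endomorphism object $\on{REnd}(\calE)$ with its cohomology; Theorem~\ref{computation of Ext} then pins this down as the graded $G_X^\vee$-algebra $A:=\mO(\fg_X^\vee[n_X]\times\fl_X^\wedge[2]//L^\wedge_X)$. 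A Barr--Beck/Morita argument in the $G_X^\vee$-equivariant dg setting, formulated as in \cite[Section~5]{CMNO}, then yields a functor
\[
\on{RHom}(\calE,-)\colon D_{L^+G_\bbR}(\Gr_\bbR)\longrightarrow D^{G_X^\vee}(A),
\]
which is fully faithful on compact objects by Lemma~\ref{faithfulness}(3) and an equivalence on the ind-completions since $\calE$ generates and its graded Ext algebra matches the target. Compatibility with the perverse heart, so that the equivalence extends the real geometric Satake equivalence $\on{Perv}_{L^+G_\bbR}(\Gr_\bbR)\is\on{Rep}(G_X^\vee)$, is built into the construction.

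The main obstacle will be the equivariant Koszul/Morita step: one must check that the $G_X^\vee$-equivariant structure on $\calE$ is rigid enough to realize the $G_X^\vee$-action on $A$ through genuine dg algebra automorphisms (rather than merely a coherent system of compatibilities), and that the resulting functor behaves well under ind-completion. Because the \cite{CMNO} framework was written to depend only on (a) the Ext computation of Theorem~\ref{computation of Ext} and (b) the parity-and-formality package of Lemma~\ref{faithfulness} and Corollary~\ref{n_X vanishing}, and not on features specific to the quaternionic case, no genuinely new categorical input should be required once we feed in the values of $n_X$ and $G_X^\vee$ appropriate to the Lorentzian or octonionic setting.
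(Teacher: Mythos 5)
Your proposal takes essentially the same approach as the paper: use Lemma~\ref{trivial} to see that $\IC_0$ is a compact generator under the $\on{Rep}(G_X^\vee)$-action, apply the Barr--Beck--Lurie theorem to rewrite $D_{L^+G_\bbR}(\Gr_{\bbR})$ as $G_X^\vee$-equivariant dg modules over $\on{RHom}(\IC_0,\IC_0\star\mO(G_X^\vee))$, invoke formality from \cite[Theorem 13.4]{CN}, and then plug in the Ext computation of Theorem~\ref{computation of Ext}. The only difference is cosmetic (you phrase the formality input as resting on the parity vanishing of Corollary~\ref{n_X vanishing}, whereas the paper cites the formality result of \cite{CN} directly), so the argument matches.
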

\begin{proof}
Lemma \ref{trivial} implies that $\IC_0$ is compact and generates $D_{L^+G_\bbR}(\Gr_{\bbR})$
under the action of $\on{Rep}(G_X^\vee)$
and it follows from Barr-Beck-Lurie theorem that 
$D_{L^+G_\bbR}(\Gr_{\bbR})$ is equivalent the dg category 
of $G_X^\vee$-equivariant dg modules over the dg-algebra 
$\on{RHom}_{\calC}(\IC_0,\IC_0\star\mO(G_X^\vee))$.
By \cite[Theorem 13.4]{CN}, the above dg-algebra is formal 
and the computation of Ext algebra $\on{Ext}^*_{\calC}(\IC_0,\IC_0\star\mO(G_X^\vee))$
in Theorem \ref{computation of Ext} implies that 
$D_{L^+G_\bbR}(\Gr_{\bbR})$ is equivalent to the dg category of 
$G_X^\vee$-equivariant dg modules over the dg-algebra $\mO(\fg_X^\vee[n_X]\times\fl_X^\wedge[2]//L^\wedge_X)$ (equipped with trivial differential). 
The proof is completed

\end{proof}

\end{document}